\newtheorem{theorem}{Theorem}
\newtheorem{lemma}{Lemma}
\newtheorem{definition}{Definition}
\DeclareMathOperator*{\argmin}{arg\,min}
\newcommand{\interior}[1]{%
  {\kern0pt#1}^{\mathrm{o}}%
}
\begin{document}

\title{Accelerated First-Order Methods: Differential Equations and Lyapunov Functions}
\author{Jonathan W. Siegel \\
  Department of Mathematics\\
  Pennsylvania State University\\
  University Park, PA 16802 \\
  \texttt{jus1949@psu.edu}}

\maketitle

\begin{abstract}
    We develop a theory of accelerated first-order optimization from the viewpoint of differential equations and Lyapunov functions.
    Building upon the previous work of many researchers, we consider differential equations
    which model the behavior of accelerated gradient descent. Our main contributions are to provide a general framework for discretizating
    the differential equations to produce accelerated methods, and to provide physical intuition which helps explain the optimal damping rate. An important novelty is the generality of our approach, which leads to a unified derivation of a wide variety of methods, 
    including versions of Nesterov's accelerated gradient descent, FISTA, and accelerated coordinate descent.
\end{abstract}

\section{Introduction}
Minimizing convex and strongly convex functions is a fundamental problem which arises in many areas of science. We concern ourselves here with the problem
\begin{equation}\label{problem}
    \argmin_{x\in \mathbb{R}^d} f(x),
\end{equation}
where $f:\mathbb{R}^d\rightarrow \mathbb{R}$ is a strongly convex function. Due to the importance of very large scale problems of the form \eqref{problem}, which arise in machine learning
and data science, first-order methods have gained popularity in recent years. 
In practice, methods which only utilize gradient information are often the only ones which
can be applied to large scale problems of the form \eqref{problem}.

Motivated by this, there has been a lot of research into developing optimal first-order methods for convex optimization. Beginning with Polyak's discovery of the heavy ball method \cite{polyak1964some}, which attains an accelerated convergence rate locally, and the
seminal discovery of Nesterov's globally accelerated gradient descent \cite{nesterov1983method}, many different accelerated methods have been developed
by many authors. For instance, accelerated methods for solving composite optimization problems are developed in \cite{beck2009fast} and 
\cite{nesterov2007gradient} and an accelerated version of coordinate descent was developed in \cite{nesterov2012efficiency}, to name only
a few.

In spite of this progress, these methods have remained somewhat mysterious and difficult to understand. 
Consequently, there has been a lot of work in 
explaining these methods. For instance, in \cite{bubeck2015geometric}, a geometric explanation of acceleration is given, in \cite{allen2014linear} accelerated methods are derived as a coupling of gradient and mirror descent, and in \cite{su2014differential,wibisono2016variational,attouch2017asymptotic,attouch2020first} these methods are studied via the differential equations which they discretize. In 
\cite{wilson2016lyapunov,wibisono2016variational},
a detailed Lyapunov analysis of both the continuous and discrete dynamics of accelerated methods are presented and connected via
a discretization analysis. In addition, the Lyapunov analysis is shown to be equivalent to the technique of estimate sequences. In the non-smooth case, accelerated forward-backward methods such as FISTA \cite{beck2009fast} are analyzed in terms of the differential inclusions which they discretize in \cite{attouch2018fast,attouch2018convergence}. These lines of analysis are further extended in the recent work \cite{attouch2020first}.

In this paper, we provide more analysis of accelerated methods and their connection with continuous dynamics. We build upon the work in \cite{wibisono2016variational,su2014differential,wilson2016lyapunov,attouch2018fast,attouch2018convergence} and study accelerated first-order methods from the viewpoint of the underlying differential equations in both the smooth and non-smooth cases. We are mainly interested in the strongly convex case, but we also analyze accelerated methods for non-smooth objectives in the convex case in Section \ref{convex-section}.

Our main contribution is the derivation of a general framework for discretizing
the differential equations to produce accelerated methods. A framework for doing this has already been developed in \cite{wilson2016lyapunov},
and we attempt to build upon their work. Our contribution is to connect accelerated forward-backward methods for non-smooth problems
to differential equations and to provide a Lyapunov analysis for such methods. In addition, we also
connect our framework to accelerated coordinate methods \cite{nesterov2012efficiency,nesterov2017efficiency}, 
for example accelerated Gauss-Seidel \cite{tu2017breaking}.
The question of whether such methods, for example accelerated coordinate descent,
can systematically be connected to differential equations was posed in \cite{wibisono2016variational}.
Our treatment leads to a unified derivation of a wide variety of methods in the
literature, including deterministic methods such as Nesterov's 
accelerated gradient descent \cite{nesterov1983method} and FISTA \cite{beck2009fast}, and stochastic methods like accelerated coordinate descent.

The key to our theory is that many of these methods can be derived as special discretizations of damped Hamiltonian dynamics with an appropriately chosen damping rate.
The discretizations we consider all look very similar; they consist of an explicit forward step in position, a semi-implicit step in
velocity, and finally a small perturbation (second order in the step size) which ensures a sufficient decrease in the objective. This general
framework, which we introduce in sections three through six, provides a unified derivation of a wide variety of accelerated first-order methods.
We find it fascinating that so many methods can be obtained as discretizations of the same equations in such a simple way.

The paper is organized as follows. In Section \ref{differential-equation-section}, we briefly analyze the differential equations underlying accelerated methods, 
following the treatment in \cite{wilson2016lyapunov,wibisono2016variational}. In Section \ref{physical-intuition-section}, we analyze the linear case and give some physical intuition which explains the optimal damping rate and helps to explain why acceleration is possible. Then, in Section \ref{original_smooth}, we discuss how to discretize the differential equations to obtain accelerated methods when the objective is smooth, obtaining a variant
of Nesterov's accelerated gradient descent method. This analysis is then modified in Section \ref{non-smooth-section} to incorporate non-smooth composite objectives. In Section \ref{stochastic-section}, we further modify the theory
to incorporate stochastic methods such as randomized coordinate descent. In Sections \ref{non-smooth-section} and \ref{stochastic-section}, we also show how accelerated methods for
composite optimization and a version of accelerated coordinate descent follow as special cases of our theory. Then, in Section \ref{convex-section} we treat the convex case as well. Finally, we
provide some concluding remarks and further research directions.

\section{The Differential Equations}\label{differential-equation-section}
In this section, we present the differential equations underlying accelerated first-order optimization methods. The theory in the 
convex, i.e. sublinear, case was considered in \cite{su2014differential,wibisono2016variational}.
We are mainly concerned with the strongly convex case, which was studied in \cite{wilson2016lyapunov,wibisono2016variational},
and we briefly summarize some of their analysis in this section.

We first recall the notion of strong convexity.
\begin{definition}
Let $\alpha > 0$. A convex function $f$ is $\alpha$-strongly convex if for all $x,y$ and $g\in \partial f(y)$, it holds that
\begin{equation}
    f(x) \geq f(y) + \langle g, x-y \rangle + \frac{\alpha}{2}\|x - y\|^2.
\end{equation}
\end{definition}
\subsection{Strongly Convex Dynamics}
If $f$ is $\alpha$-strongly convex and differentiable, we consider the following damped Hamiltonian dynamics 
with potential energy $f$ (see \cite{wilson2016lyapunov}, equation 7)
\begin{equation} \label{damped_hamiltonian_dynamics}
    \dot{x} = v,~\dot{v} = -2\sqrt{\alpha} v - \nabla f(x).
\end{equation}
For non-differentiable $f$, we replace the gradient by an element of the sub-differential to obtain the dynamics
\begin{equation}\label{damped_hamiltonian_dynamics_non_smooth}
    \dot{x} = v,~- 2\sqrt{\alpha} v - \dot{v}\in \partial f(x).
\end{equation}

Following the argument in \cite{wilson2016lyapunov}, we use a Lyapunov function to prove that the objective error decreases at a linear rate of $-\sqrt{\alpha}$ under this dynamics.
\begin{theorem}\label{damped_dynamics_convergence}
Let $f$ be $\alpha$-strongly convex and differentiable. Assume that $x(t)$ and $v(t)$ obey the dynamics \ref{damped_hamiltonian_dynamics_non_smooth} (or equivalently \ref{damped_hamiltonian_dynamics}) and $v(0) = 0$. Then we have
\begin{equation}
    f(x(t)) - f(x^*) \leq 2e^{-\sqrt{\alpha}t}(f(x(0)) - f(x^*)),
\end{equation}
 where $x^*$ minimizes $f$.
\end{theorem}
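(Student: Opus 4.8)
The plan is to follow the standard Lyapunov approach and exhibit an energy functional that decays along trajectories at the advertised rate. Concretely, I would guess the candidate
\begin{equation*}
    V(t) = e^{\sqrt{\alpha}t}\,\mathcal{E}(t), \qquad \mathcal{E}(t) = f(x(t)) - f(x^*) + \frac{1}{2}\left\|v(t) + \sqrt{\alpha}\,(x(t) - x^*)\right\|^2.
\end{equation*}
The shift of the velocity by $\sqrt{\alpha}(x - x^*)$ is the crucial design choice: it is exactly what makes the cross terms in $\dot{V}$ telescope against strong convexity. My goal is to show $\dot{V}(t) \le 0$, so that $V(t) \le V(0)$, and then to convert this into the stated bound.

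First I would compute $\dot{\mathcal{E}}$ using the dynamics $\dot{x} = v$ and $\dot{v} = -2\sqrt{\alpha}v - \nabla f(x)$. The key simplification is that $\dot v + \sqrt{\alpha}\dot x = -\sqrt{\alpha}v - \nabla f(x)$, which feeds cleanly into the derivative of the quadratic term. After expanding, the $\langle \nabla f(x), v\rangle$ contributions cancel and I expect to arrive at
\begin{equation*}
    \dot{\mathcal{E}} = -\sqrt{\alpha}\|v\|^2 - \alpha\langle x - x^*, v\rangle - \sqrt{\alpha}\,\langle x - x^*, \nabla f(x)\rangle.
\end{equation*}
Next I would form $\sqrt{\alpha}\,\mathcal{E} + \dot{\mathcal{E}}$ (so that $\dot V = e^{\sqrt\alpha t}(\sqrt\alpha\,\mathcal E+\dot{\mathcal E})$). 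Expanding the squared norm inside $\sqrt{\alpha}\,\mathcal{E}$ cancels the mixed term $\alpha\langle x-x^*, v\rangle$, leaving a combination of $f(x)-f(x^*)$, $\|v\|^2$, $\|x-x^*\|^2$, and $\langle x-x^*,\nabla f(x)\rangle$. At this point I invoke $\alpha$-strong convexity in the form $f(x) - f(x^*) \le \langle \nabla f(x), x - x^* \rangle - \tfrac{\alpha}{2}\|x - x^*\|^2$ (taking $y=x$ and the test point $x^*$ in the Definition). Substituting, the gradient inner products and the $\|x-x^*\|^2$ terms cancel exactly, and I expect to be left with $\sqrt{\alpha}\,\mathcal{E} + \dot{\mathcal{E}} \le -\tfrac{\sqrt{\alpha}}{2}\|v\|^2 \le 0$, giving $\dot{V}\le 0$.

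Finally, to extract the constant $2$, I would evaluate $V(0)$ using the initial condition $v(0)=0$, obtaining $V(0) = f(x(0)) - f(x^*) + \tfrac{\alpha}{2}\|x(0) - x^*\|^2$. Applying strong convexity at the minimizer (where $\nabla f(x^*) = 0$) gives $\tfrac{\alpha}{2}\|x(0)-x^*\|^2 \le f(x(0)) - f(x^*)$, hence $V(0) \le 2\,(f(x(0)) - f(x^*))$. Since the quadratic term in $\mathcal{E}(t)$ is nonnegative, $e^{\sqrt{\alpha}t}(f(x(t)) - f(x^*)) \le V(t) \le V(0)$, and dividing through by $e^{\sqrt{\alpha}t}$ yields the claim. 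I anticipate the main obstacle to be purely one of bookkeeping in the derivative computation — guessing the correct velocity shift $\sqrt{\alpha}(x-x^*)$ is what makes everything collapse, and without it the strong convexity inequality will not align with the leftover terms. The non-smooth dynamics \eqref{damped_hamiltonian_dynamics_non_smooth} are handled identically, since the strong convexity inequality in the Definition already holds for an arbitrary subgradient $g \in \partial f(x)$ in place of $\nabla f(x)$.
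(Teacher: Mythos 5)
Your proposal is correct and follows essentially the same route as the paper: the functional $\mathcal{E}(t)$ is exactly the paper's Lyapunov function $L(t)$, and showing $\frac{d}{dt}\bigl(e^{\sqrt{\alpha}t}\mathcal{E}(t)\bigr)\le 0$ is just a restatement of the paper's differential inequality $L'(t)\le -\sqrt{\alpha}L(t)$, with the same use of strong convexity both in the derivative estimate and in bounding the initial value by $2(f(x(0))-f(x^*))$. The computations you sketch (the cancellation of $\langle\nabla f(x),v\rangle$, the cancellation of $\alpha\langle x-x^*,v\rangle$, and the leftover $-\tfrac{\sqrt{\alpha}}{2}\|v\|^2$) all check out.
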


Before we give the proof of Theorem \ref{damped_dynamics_convergence}, we remark upon the special damping rate of $2\sqrt{\alpha}$ which is taken in \eqref{damped_hamiltonian_dynamics_non_smooth}. In many practical instances, the strong convexity parameter may not be known and it may be a difficult problem to determine the correct damping rate. In this case, methods with an adaptive damping rate have been developed \cite{o2015adaptive,nesterov2013gradient}. In addition, when the objective is only convex, but not strongly convex, the damping rate should be taken decreasing to $0$ at a rate of $O(\frac{1}{t})$ \cite{su2014differential}.

\begin{proof}[Proof of Theorem \ref{damped_dynamics_convergence}]
Consider the Lyapunov function $$L(t) = f(x_t) - f(x^*) + \frac{1}{2}\|\sqrt{\alpha}(x_t - x^*) + v_t\|^2.$$ Here we have written $x_t$ for $x(t)$ and $v_t$ for $v(t)$ to simplify notation.
We will show that $L^\prime(t) \leq -\sqrt{\alpha}L(t)$. This completes the proof since 
\begin{equation}
    f(x_t) - f(x^*) \leq L(t) \leq e^{-\sqrt{\alpha}t}L(0) = e^{-\sqrt{\alpha}t}(f(x_0) - f(x^*) + \frac{\alpha}{2}\|x_0 - x^*\|^2),
\end{equation}
and $\frac{\alpha}{2}\|x_0 - x^*\|^2 \leq f(x_0) - f(x^*)$ by the strong convexity of $f$, so that
\begin{equation}
    f(x_t) - f(x^*) \leq 2e^{-\sqrt{\alpha}t}(f(x(0)) - f(x^*)).
\end{equation}
Using the properies of the subdifferential, we bound $L^\prime(t)$ as follows:
\begin{equation}
    L^\prime(t) = \langle g_t, \dot{x}_t\rangle + \langle \sqrt{\alpha}\dot{x}_t + \dot{v}_t, \sqrt{\alpha}(x_t - x^*) + v_t\rangle,
\end{equation}
where $g_t := - 2\sqrt{\alpha} v_t - \dot{v}_t\in \partial f(x_t)$.
We now use the dynamics \ref{damped_hamiltonian_dynamics} to evaluate $\dot{x}_t$ and $\dot{v}_t$, we obtain
\begin{equation}
    L^\prime(t) = \langle g_t, v_t\rangle + \langle -\sqrt{\alpha}v_t - g_t, \sqrt{\alpha}(x_t - x^*) + v\rangle.
\end{equation}
Simplifying this, we see that 
\begin{equation}\label{line315}
    L^\prime(t) = -\sqrt{\alpha}\langle g_t, (x_t - x^*)\rangle - \alpha \langle v_t, (x_t - x^*)\rangle - \sqrt{\alpha}\langle v_t, v_t\rangle.
\end{equation}
It is here that we use strong convexity, namely 
$$\langle g_t, (x_t - x^*)\rangle \geq f(x_t) - f(x^*) + \frac{\alpha}{2}\|x_t - x^*\|^2,$$
for any $g_t\in \partial f(x_t)$.
Plugging this into \ref{line315} and simplifying the inner products we get
\begin{equation}
    L^\prime(t) \leq -\sqrt{\alpha}\left(f(x_t) - f(x^*) + \frac{1}{2}\|\sqrt{\alpha}(x_t - x^*) + v_t\|^2\right) - \frac{\sqrt{\alpha}}{2}\|v_t\|^2,
\end{equation}
which implies
\begin{equation}
    L^\prime(t) \leq -\sqrt{\alpha}L(t)
\end{equation}
as desired.
\end{proof}

\section{Intuition Behind the Dynamics}\label{physical-intuition-section}
The intuition behind the dynamics in \eqref{damped_hamiltonian_dynamics} comes from imagining a particle in a potential defined by the objective $f$. Without any damping (friction),
the particle will oscillate freely in this potential and the total energy $f(x) + \frac{1}{2}\|v\|^2$ is conserved. The damping term $-\gamma v$ in \eqref{damped_hamiltonian_dynamics}
causes the particle to lose energy so that it will eventually settle at the minimum of $f$.

We want to choose the damping rate $\gamma$ so that the particle will settle in the minimum energy configuration as fast as possible. The optimal damping rate can be 
determined in the case when the objective $f$ is quadratic, which provides intuition behind the damping rate in \eqref{damped_hamiltonian_dynamics}. Finally, we consider discretizing \eqref{damped_hamiltonian_dynamics} with a fixed step-size in the quadratic case to help provide intuition about why acceleration is possible.

So let $f(x) = \frac{1}{2}x^TAx$ be a positive definite quadratic objective. We diagonalize $A$ and write $x(t)$ in the basis of eigenvectors of $A$ as
\begin{equation}
    x(t) = \displaystyle\sum_k x_k(t)w_k,~v(t) = \displaystyle\sum_k v_k(t)w_k,
\end{equation}
where $w_1,...,w_d$ are the eigenvectors of $A$ with corresponding eigenvalues $0 < \lambda_1 \leq \cdots \leq \lambda_d$. The dynamics decouples across each of the eigendirections and we get
\begin{equation}\label{damped_harmonic_oscillator}
    \dot{x}_k(t) = v_k(t),~\dot{v}_k(t) = -\gamma v_k(t) - \lambda_kx_k(t),
\end{equation}
a damped harmonic oscillator for each eigendirection. 

Analyzing a damped harmonic oscillator is an undergraduate physics exercise. The
characteristic polynomial of \eqref{damped_harmonic_oscillator} is
\begin{equation}
    p(z) = z^2 + \gamma z + \lambda_k.
\end{equation}
The roots of this polynomial, $z = \frac{1}{2}(-\gamma \pm \sqrt{\gamma^2 - 4\lambda_k})$, determine the behavior of the harmonic oscillator. The qualitative behavior of the
system depends upon whether the characteristic polynomial has two real roots, a repeated real root, or two imaginary roots.

If $\gamma^2 - 4\lambda_k > 0$, the characteristic polynomial has two real roots. This is the over-damped regime, and the oscillator stays on the same side of equilibrium
throughout the dynamics. The decay rate of the harmonic oscillator is dominated by the largest root, $z = \frac{1}{2}(-\gamma \pm \sqrt{\gamma^2 - 4\lambda_k})$.

If $\gamma^2 - 4\lambda_k < 0$, the characteristic polynomial has two imaginary roots. This is the under-damped regime, because the oscillator swings back and forth,
losing energy in each oscillation. The damping is not strong enough to keep the oscillator on the same side of equilibrium. The decay rate in this regime is equal to the
real part of the roots, $\text{Re}(z) = -\frac{1}{2}\gamma$.

If $\gamma^2  -4\lambda_k = 0$, the characteristic polynomial has repeated real roots. This is the critical damping rate for harmonic oscillator. The oscillator
stays on the same side of equilibrium but decays toward equilibrium as fast as possible. The decay rate is equal to the root $z = -\frac{1}{2}\gamma$.

Fixing $\lambda_k$, we see that the fastest decay rate possible is $-\sqrt{\lambda_k}$, which occurs for a critically damped harmonic oscillator. This follows since
in the under damped regime the decay rate is $-\frac{1}{2}\gamma$, with $\gamma < 2\sqrt{\lambda_k}$, and in the over damped regime we have
\begin{equation}
    \gamma^2 - 4\lambda_k = (\gamma + 2\sqrt{\lambda_k})(\gamma - 2\sqrt{\lambda_k}) > (\gamma - 2\sqrt{\lambda_k})^2
\end{equation}
The last inequality occurs because $\gamma > 2\sqrt{\lambda_k}$. This means that $$\sqrt{\gamma^2 - 4\lambda_k} > \gamma - 2\sqrt{\lambda_k}$$ and so for the larger
real root $$z = \frac{1}{2}(-\gamma + \sqrt{\gamma^2 - 4\lambda_k}) > -\sqrt{\lambda_k}$$

Now, in the dynamics \eqref{damped_hamiltonian_dynamics}, we must choose a single damping rate $\gamma$ for all eigenvalues $\lambda_k$. In order to obtain the fastest
possible decay rate, we want to maximize the slowest decay rate among the $\lambda_k$. 

Notice that the fastest this decay rate can be is $-\sqrt{\lambda_1}$, since the
harmonic oscillator corresponding to the smallest eigenvalue cannot decay any faster. Moreover, this decay rate is achieved when $\gamma$ is chosen so that the
mode corresponding to $\lambda_1$ is critically damped, because then all other modes will be under damped and will also decay at the rate $-\frac{1}{2}\gamma$.

What about the step size required for a stable discretization of the dynamics? Just as we chose a single damping rate for all eigenmodes, we must choose a single step 
size for all of the eigenvalues $\lambda_k$. If we use an integrator whose region of stability contains the negative unit semicircle (such as Runge-Kutta 4 \cite{runge1895numerische,wanner1996solving}), then the discretization of the harmonic oscillator corresponding to $\lambda_k$ will be stable if
$$\frac{1}{\Delta t} \geq \max\{|z_1|,|z_2|\}$$
where $z_1$ and $z_2$ are the (possibly complex) roots of the characteristic equation. Since the optimal damping parameter is $2\sqrt{\alpha}$, all of the modes are
either critically damped or under damped. So the roots $z_i$ are complex and we calculate
$$|z_1| = |z_2| = \frac{1}{2}\sqrt{\gamma^2 + 4\lambda_k - \gamma^2} = \sqrt{\lambda_k} \leq \sqrt{\lambda_d}$$

So we see that the dynamics decays with exponential rate $-\sqrt{\lambda_1}$ and the step size required for a stable discretiziation is $\Delta t<1/\sqrt{\lambda_d}$. 
This explains why discretizing \eqref{damped_hamiltonian_dynamics} produces a method which requires $O(\sqrt{\kappa})$ iterations to converge, where $\kappa = \frac{\lambda_d}{\lambda_1}$ is the condition number. 

Indeed, for the quadratic problem we can use any integrator whose region of stability contains the negative unit semicircle to obtain
an accelerated method. Compare the resulting methods with the Chebyshev semi-iterative methods used for solving linear systems, see \cite{golub2013matrix}, Section 10.1.5 or \cite{varga1962iterative}, Chapter 5. The Chebyshev semi-iterative methods use (shifted, depending upon the eigenvalue bounds of $A$) Chebyshev polynomials, while the accelerated methods derived above use a different sequence of polynomials with the same asymptotics.

\section{Discrete Dynamics for Smooth Objectives}\label{original_smooth}
Theorem \ref{damped_dynamics_convergence} concerns the convergence of the dynamics \eqref{damped_hamiltonian_dynamics} in the continuous case. The existence of discrete schemes which are able to achieve accelerated convergence has been known since the work of Polyak and Nesterov \cite{polyak1964some,nesterov1983method}.
In this section, we connect these two viewpoints and derive a version of Nesterov's accelerated gradient descent \cite{nesterov1983method} as a particular
discretization of \eqref{damped_hamiltonian_dynamics}. The method is made up of a forward step in position ($x$), a semi-implicit
step in velocity ($v$), and finally a small perturbation (second order in the step size) which ensures a sufficient decrease in the 
Lyapunov function. This perturbation moves $x$ to decrease the objective and moves $v$ to compensate, so that the second part of 
the Lyapunov function (the squared norm) doesn't increase. These steps are all explained in detail in this section. 

We begin by briefly recalling the definition of 
smoothness.

\begin{definition}
Let $L > 0$. A differentiable function $f$ is $L$-smooth if
\begin{equation}
    \|\nabla f(x) - \nabla f(y)\| \leq L\|x - y\|.
\end{equation}
\end{definition}
It is an easy consequence of the above definition that
\begin{equation}
    f(y) \leq f(x) + \langle\nabla f(x), y-x\rangle + \frac{L}{2}\|x-y\|^2.
\end{equation}

Our goal will now be to discretize \eqref{damped_hamiltonian_dynamics} so that a discrete version of the Lyapunov function in the proof of 
Theorem \ref{damped_dynamics_convergence}, for instance
$$
    L_n = f(x_n) - f(x^*) + \frac{1}{2}\|\sqrt{\alpha}(x_n - x^*) + v_n\|^2,
$$
will be decreased by a constant factor in each timestep. In order to make the Lyapunov argument work, however, a slight modification is necessary. We will actually consider the discrete Lyapunov function
\begin{equation}\label{lyapunov_function_smooth}
    L_n = f(x_n) - f(x^*) + \frac{1}{2}\|\sqrt{\alpha}(x_n - x^*) + (1+s\sqrt{\alpha})v_n\|^2,
\end{equation}
where $s$ is the step size of the discretization.

We first note that the $L$-smoothness of $f$ implies that
$$f\left(x_n - \frac{1}{L}\nabla f(x_n)\right) - f(x^*) \leq f(x_n) - f(x^*) - \frac{1}{2L}\|\nabla f(x_n)\|^2,$$
i.e. that taking a small gradient step ensures a decrease of the objective. Observe also that the second part of the Lyapunov
function, $$\frac{1}{2}\|\sqrt{\alpha}(x_n - x^*) + (1+s\sqrt{\alpha})v_n\|^2,$$ can be kept constant by adjusting $v_n$ appropriately, which will guarantee a sufficient decrease of the entire Lyapunov function. Specifically, we adjust $v_n$ so that $\sqrt{\alpha}(x_n - x^*) + (1+s\sqrt{\alpha})v_n$ is kept constant. Thus, if we subtract $\frac{1}{L}\nabla f(x_n)$ from $x_n$, then we must add $(1+s\sqrt{\alpha})^{-1}\frac{\sqrt{\alpha}}{L}$ to $v_n$. Putting this
together, we obtain the following update:
\begin{equation}\label{decrease_update}
    x_n \gets x_n - \frac{1}{L}\nabla f(x_n),~v_n\gets v_n + (1+s\sqrt{\alpha})^{-1}\frac{\sqrt{\alpha}}{L}\nabla f(x_n),
\end{equation}
which decreases the Lyapunov function by $\frac{1}{2L}\|\nabla f(x_n)\|^2$, i.e. so that
$$L_n\gets L_n - \frac{1}{2L}\|\nabla f(x_n)\|^2.$$
We call this a sufficient decrease update and it plays an important role in ensuring the global convergence of our discretization,
which we now introduce.

Consider the following discretization of \eqref{damped_hamiltonian_dynamics}, which consists of a forward Euler step
in $x$ and a semi-implicit step in $v$, followed by a sufficient decrease update:
\begin{equation} \label{discretization_smooth_deterministic}
    \begin{split}
        x_n^\prime &= x_n + sv_n \\
        v_n^\prime &= v_n - (1+s\sqrt{\alpha})^{-1}\left(s\sqrt{\alpha}v_n + s\nabla f(x_n^\prime)\right) - s\sqrt{\alpha}v_n^\prime \\
        x_{n+1} &= x_n^\prime - \frac{1}{L}\nabla f(x_n^\prime) \\
        v_{n+1} &= v_n^\prime + (1+s\sqrt{\alpha})^{-1}\frac{\sqrt{\alpha}}{L} \nabla f(x_n^\prime).
    \end{split}
\end{equation}
The most complicated part of this discretization is the semi-implicit update for $v$. Unfortunately, we haven't found a simpler
update for which the discretization remains stable, i.e. for which global convergence can be proved.

We now show that as long as the step size $s \leq \frac{1}{\sqrt{L}}$, the discretization \eqref{discretization_smooth_deterministic} converges linearly. Note that it is critical that we are able to take the step size as large as $\frac{1}{\sqrt{L}}$. This results in an accelerated convergence rate of $(1+\kappa^{-\frac{1}{2}})^{-n}$.
\begin{theorem} \label{accelerated_gradient_descent}
    If $s \leq \frac{1}{\sqrt{L}}$ and $f$ is $\alpha$-strongly convex and $L$-smooth, then the iteration \eqref{discretization_smooth_deterministic} satisfies
    $$L_{n+1} \leq (1 + s\sqrt{\alpha})^{-1}L_n.$$
    In particular, if $v_0 = 0$, then
    $$f(x_n) - f(x^*) \leq 2\left(1 + s\sqrt{\alpha}\right)^{-n}(f(x_0) - f(x^*)).$$
\end{theorem}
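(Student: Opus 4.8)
The plan is to split each iteration into two sub-steps: the \emph{dynamics step} $(x_n,v_n)\mapsto(x_n^\prime,v_n^\prime)$ given by the forward Euler and semi-implicit velocity updates, and the \emph{sufficient-decrease step} $(x_n^\prime,v_n^\prime)\mapsto(x_{n+1},v_{n+1})$. Write $\beta = 1+s\sqrt{\alpha}$ and let $\tilde{L}_n$ denote the Lyapunov function \eqref{lyapunov_function_smooth} evaluated at $(x_n^\prime,v_n^\prime)$. As already engineered in \eqref{decrease_update}, the sufficient-decrease step leaves the quadratic term $\tfrac12\|\sqrt{\alpha}(\cdot-x^*)+\beta(\cdot)\|^2$ exactly invariant, while $L$-smoothness gives $f(x_{n+1})-f(x^*)\le f(x_n^\prime)-f(x^*)-\tfrac{1}{2L}\|\nabla f(x_n^\prime)\|^2$. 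Hence $L_{n+1}\le \tilde{L}_n-\tfrac{1}{2L}\|\nabla f(x_n^\prime)\|^2$, and it suffices to prove $\tilde{L}_n-\tfrac{1}{2L}\|\nabla f(x_n^\prime)\|^2\le \beta^{-1}L_n$, equivalently, after clearing denominators, $\beta\tilde{L}_n-L_n\le \tfrac{\beta}{2L}\|\nabla f(x_n^\prime)\|^2$.

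The heart of the argument is to evaluate $\beta\tilde{L}_n-L_n$ explicitly. I would introduce $p_n=\sqrt{\alpha}(x_n-x^*)+\beta v_n$, so that $L_n=f(x_n)-f(x^*)+\tfrac12\|p_n\|^2$. Solving the semi-implicit velocity update for $v_n^\prime$ and using $x_n^\prime=x_n+sv_n$, I expect the position and velocity shifts to combine into the clean identity $p_n^\prime=p_n-\beta^{-1}s\,(\sqrt{\alpha}v_n+\nabla f(x_n^\prime))$; and, eliminating $x_n$ in favour of $x_n^\prime$, the equally clean identity $p_n=\sqrt{\alpha}(x_n^\prime-x^*)+v_n$. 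These two identities are precisely what the otherwise mysterious semi-implicit step is built to produce, and verifying them is the first genuine computation.

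With these in hand I would expand $\beta\tilde{L}_n-L_n$ using the splitting $\beta[f(x_n^\prime)-f(x^*)]-[f(x_n)-f(x^*)]=[f(x_n^\prime)-f(x_n)]+s\sqrt{\alpha}[f(x_n^\prime)-f(x^*)]$, bounding the first bracket by convexity, $f(x_n^\prime)-f(x_n)\le s\langle\nabla f(x_n^\prime),v_n\rangle$, and the second by $\alpha$-strong convexity, $f(x_n^\prime)-f(x^*)\le\langle\nabla f(x_n^\prime),x_n^\prime-x^*\rangle-\tfrac{\alpha}{2}\|x_n^\prime-x^*\|^2$, exactly mirroring the continuous proof of Theorem \ref{damped_dynamics_convergence}. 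Expanding $\tfrac{\beta}{2}\|p_n^\prime\|^2-\tfrac12\|p_n\|^2$ via $p_n^\prime=p_n-\beta^{-1}sq$ with $q=\sqrt{\alpha}v_n+\nabla f(x_n^\prime)$, and writing everything in terms of $u=x_n^\prime-x^*$, $v_n$ and $\nabla f(x_n^\prime)$, I anticipate every term involving $u$ (the $\|u\|^2$, $\langle u,v_n\rangle$ and $\langle u,\nabla f(x_n^\prime)\rangle$ terms) to cancel identically — the discrete analogue of the collapse seen at \eqref{line315}. What survives is $\beta\tilde{L}_n-L_n\le \beta^{-1}\big(-\tfrac{s\sqrt{\alpha}}{2}\|v_n\|^2+s^2\sqrt{\alpha}\langle v_n,\nabla f(x_n^\prime)\rangle+\tfrac{s^2}{2}\|\nabla f(x_n^\prime)\|^2\big)$.

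To finish, I would complete the square in $v_n$, namely $-\tfrac{s\sqrt{\alpha}}{2}\|v_n\|^2+s^2\sqrt{\alpha}\langle v_n,\nabla f(x_n^\prime)\rangle=-\tfrac{s\sqrt{\alpha}}{2}\|v_n-s\nabla f(x_n^\prime)\|^2+\tfrac{s^3\sqrt{\alpha}}{2}\|\nabla f(x_n^\prime)\|^2$, discard the nonpositive square, and collect the surviving gradient terms into $\tfrac{s^2}{2}(1+s\sqrt{\alpha})\|\nabla f(x_n^\prime)\|^2=\tfrac{s^2\beta}{2}\|\nabla f(x_n^\prime)\|^2$. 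This yields $\beta\tilde{L}_n-L_n\le \beta^{-1}\cdot\tfrac{s^2\beta}{2}\|\nabla f(x_n^\prime)\|^2=\tfrac{s^2}{2}\|\nabla f(x_n^\prime)\|^2$, so the required inequality $\le\tfrac{\beta}{2L}\|\nabla f(x_n^\prime)\|^2$ holds precisely when $s^2\le \beta/L$, which is guaranteed by the hypothesis $s\le 1/\sqrt{L}$ since $\beta\ge 1$. The rate $L_{n+1}\le\beta^{-1}L_n$ then iterates to $L_n\le\beta^{-n}L_0$, and the initial condition $v_0=0$ gives $L_0=f(x_0)-f(x^*)+\tfrac{\alpha}{2}\|x_0-x^*\|^2\le 2(f(x_0)-f(x^*))$ by strong convexity, giving the stated bound. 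I expect the one real obstacle to be establishing the total cancellation of the $u$-terms, which is exactly where the design of the semi-implicit velocity step pays off; everything else is bookkeeping, completing the square, and invoking the step-size restriction.
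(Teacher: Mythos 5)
Your proposal is correct and follows essentially the same route as the paper's proof: the same Lyapunov function \eqref{lyapunov_function_smooth}, the same split into a dynamics step plus a sufficient-decrease step, convexity applied at $x_n$ and strong convexity at $x^*$, a completed square in $v_n$ whose nonpositive remainder is discarded, and the step-size condition $s^2\le 1/L$ absorbing the surviving $\tfrac{s^2}{2}\|\nabla f(x_n^\prime)\|^2$ term. Your bookkeeping via $p_n=\sqrt{\alpha}(x_n-x^*)+(1+s\sqrt{\alpha})v_n$ and the identities $p_n=\sqrt{\alpha}(x_n^\prime-x^*)+v_n$, $p_n^\prime=p_n-(1+s\sqrt{\alpha})^{-1}s(\sqrt{\alpha}v_n+\nabla f(x_n^\prime))$ is a cleaner presentation of the paper's term-by-term expansion, but not a different argument.
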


Before proving Theorem \ref{accelerated_gradient_descent}, we compare \eqref{discretization_smooth_deterministic} with the traditional Nesterov accelerated gradient scheme \cite{nesterov1983method}, which is given by:
\begin{equation}
 \begin{split}
 &x_n = y_{n} - \frac{1}{L}\nabla f(y_n)\\
 &y_{n+1} = x_n + \left(\frac{\sqrt{L} - \sqrt{\alpha}}{\sqrt{L} + \sqrt{\alpha}}\right)(x_n - x_{n-1}),
 \end{split}
\end{equation}
and the heavy ball method \cite{polyak1964some}, given by:
\begin{equation}
  x_{n+1} = x_n - \frac{4}{(\sqrt{L} + \sqrt{\alpha})^2}\nabla f(x_n) + \left(\frac{\sqrt{L} - \sqrt{\alpha}}{\sqrt{L} + \sqrt{\alpha}}\right)^2(x_n - x_{n-1}).
\end{equation}
By setting $v_n = x_n - x_{n-1}$, both of these methods can be seen to be discretizations of the dynamics \eqref{damped_hamiltonian_dynamics}, with Nesterov's scheme corresponding to a step size of $\frac{1}{\sqrt{L}}$ and Polyak's method corresponding to a larger step size of $\frac{2}{\sqrt{L} + \sqrt{\alpha}}$. However, Polyak's larger step size comes at the cost of only local convergence and convergence for quadratic objectives, while Nesterov's scheme enjoys global convergence for general strongly convex objectives. 

In Theorem \ref{accelerated_gradient_descent}, we are able to take a step size which matches, but does not exceed, Nesterov's scheme. In addition, our convergence rate of $(1+\kappa^{-\frac{1}{2}})^{-n}$ is slightly worse than Nesterov's rate of $(1-\kappa^{-\frac{1}{2}})^{n}$. For this reason, we don't expect \eqref{discretization_smooth_deterministic} to perform better in practice than Nesterov's scheme. We merely present it as a connection between the continuous dynamics \eqref{damped_hamiltonian_dynamics} and discrete accelerated methods.

\begin{proof}[Proof of Theorem \ref{accelerated_gradient_descent}]
Throughout the proof, we will use the following elementary fact. Let $t_n$ be some quantity which changes throughout our 
iteration. Then
$$\frac{1}{2}\|t_{n+1}\|^2 - \frac{1}{2}\|t_n\|^2 = \langle t_{n+1} - t_n, t_n\rangle + \frac{1}{2}\|t_{n+1} - t_n\|^2$$
and
$$\frac{1}{2}\|t_{n+1}\|^2 - \frac{1}{2}\|t_n\|^2 = \langle t_{n+1} - t_n, t_{n+1}\rangle - \frac{1}{2}\|t_{n+1} - t_n\|^2.$$
Applying each of these identities once, we see that if $t_{n+1} - t_n = a + b$, then
$$\frac{1}{2}\|t_{n+1}\|^2 - \frac{1}{2}\|t_n\|^2 = \langle a, t_n\rangle + \langle b,t_{n+1}\rangle + \frac{1}{2}\|a\|^2 - \frac{1}{2}\|b\|^2.$$
We will use these identities without explicit mention in what follows. We now prove that
$$L_{n+1} \leq L_n - s\sqrt{\alpha}L_{n+1},$$
where $L_n$ is the Lyapunov function given in \eqref{lyapunov_function_smooth}.
To do so, we calculate the change in $L$ due to the forward step in $x$,
\begin{equation}\label{forward-step}
\begin{split}
    L(x_n^\prime, v_n) - L_n =~ &f(x_n^\prime) - f(x_n) + s\langle \sqrt{\alpha}v_n, \sqrt{\alpha}(x^\prime_n - x^*) + (1+s\sqrt{\alpha})v_n\rangle \\
    & - \frac{s^2\alpha}{2}\|v_n\|^2,
    \end{split}
\end{equation}
and the change due to the semi-implicit step for $v$, noting that the change in $v$ can be broken up as 
$$(1+s\sqrt{\alpha})(v_n^\prime - v_n) = -(s\sqrt{\alpha}v_n + s\nabla f(x_n^\prime)) - s\sqrt{\alpha}(1+s\sqrt{\alpha})v_n^\prime,$$
to get
\begin{equation}\label{backward-step}
    \begin{split}
        L(x_n^\prime, v_n^\prime) - L(x_n^\prime, v_n) = &-s\langle \sqrt{\alpha}v_n + \nabla f(x_n^\prime), \sqrt{\alpha}(x^\prime_n - x^*) + (1+s\sqrt{\alpha})v_n\rangle \\
& - s\langle \sqrt{\alpha}(1+s\sqrt{\alpha})v_n^\prime, \sqrt{\alpha}(x^\prime_n - x^*) + (1+s\sqrt{\alpha})v_n^\prime\rangle \\
& + \frac{s^2}{2}\|\sqrt{\alpha}v_n + \nabla f(x_n^\prime)\|^2 \\
& - \frac{s^2\alpha}{2}\|v_n^\prime\|^2.
    \end{split}
\end{equation}
Adding equations \eqref{forward-step} and \eqref{backward-step}, collecting terms, and recalling that $x_n^\prime - x_n = sv_n$, we obtain
\begin{equation}\label{line248}
    \begin{split}
         L(x_n^\prime, v_n^\prime) - L_n = &f(x_n^\prime) - f(x_n) - \langle\nabla f(x_n^\prime), x_n^\prime - x_n\rangle\\
         & - s\sqrt{\alpha}\langle\nabla f(x_n^\prime), x_n^\prime - x^*\rangle \\
         & - s\sqrt{\alpha}\langle (1+s\sqrt{\alpha})v_n^\prime, \sqrt{\alpha}(x^\prime_n - x^*) + (1+s\sqrt{\alpha})v_n^\prime\rangle \\
         & + \frac{s^2}{2}\|\nabla f(x_n^\prime)\|^2 - \frac{s^2\alpha}{2}\|v_n^\prime\|^2.
    \end{split}
\end{equation}
The terms on the first line are $\leq 0$, by the convexity of $f$. The inner product on the second line can be bounded using
the strong convexity of $f$, as
$$\langle\nabla f(x_n^\prime), x_n^\prime - x^*\rangle \geq f(x_n^\prime) - f(x^*) + \frac{\alpha}{2}\|x_n^\prime - x^*\|^2.$$
Plugging this bound into equation \eqref{line248} and completing the square with line three, yields
\begin{equation}
    \begin{split}
        L(x_n^\prime, v_n^\prime) - L_n \leq & -s\sqrt{\alpha}L(x_n^\prime, v_n^\prime) + \frac{s^2}{2}\|\nabla f(x_n^\prime)\|^2 \\
        & - \frac{s^2\alpha}{2}\|v_n^\prime\|^2 - \frac{s\sqrt{\alpha}}{2}(1+s\sqrt{\alpha})^2\|v_n^\prime\|^2.
    \end{split}
\end{equation}
The sufficient decrease update now decreases the Lyapunov function by at least $\frac{s^2}{2}\|\nabla f(x_n^\prime)\|^2$ since
$s\leq \frac{1}{\sqrt{L}}$, and we obtain
\begin{equation}
    L_{n+1} - L_n \leq -s\sqrt{\alpha}L(x_n^\prime, v_n^\prime).
\end{equation}
To complete the proof, we merely note that $L_{n+1} \leq L(x_n^\prime, v_n^\prime)$ (the sufficient decrease update decreased the
Lyapunov function). This implies
\begin{equation}
    L_{n+1} - L_n \leq -s\sqrt{\alpha}L_{n+1}
\end{equation}
as desired.

By induction, we thus have 
$$f(x_n) - f(x^*) \leq L_n \leq \left(1 + s\sqrt{\alpha}\right)^{-n}L_0.$$
Finally, if $v_0 = 0$, we get, since $f$ is $\alpha$-strongly convex,
$$L_0 = f(x_0) - f(x^*) + \frac{\alpha}{2}\|x_0 - x^*\|^2 \leq 2(f(x_0) - f(x^*)).$$
\end{proof}
\section{Discrete Dynamics for Non-smooth Objectives}\label{non-smooth-section}
In this section we consider the situation where the objective $f$ is not smooth, but we still assume that $f$ is $\alpha$-strongly
convex. We first consider simply replacing $\nabla f(x_n^\prime)$ by some element in the sub-differential of 
$f$, say $g_n\in \partial f(x_n^\prime)$. We observe that the
argument for smooth functions continues to apply as long as we can find an analog of the sufficient decrease update \eqref{decrease_update}, i.e. if we 
have an update
\begin{equation}
    x'_n \gets x'_n - \delta_n,~v_n\gets v_n + (1+s\sqrt{\alpha})^{-1}\sqrt{\alpha}\delta_n
\end{equation}
such that 
\begin{equation}\label{decrease-condition}
 f(x'_n - \delta_n) \leq f(x'_n) - \frac{s^2}{2}\|g_n\|^2.
\end{equation}

Unfortunately, in many cases of interest this is not possible. To get around this, we allow $g_n \notin \partial f(x_n^\prime)$, and generalize the condition \eqref{decrease-condition}.
In particular, we will show that our Lyapunov argument still works if we replace the decrease condition \eqref{decrease-condition} by the more general condition
\begin{equation} \label{stability_decrease_condition}
    \forall z,~f(x'_n - \delta_n) - f(z) \leq \langle g_n, x_n^\prime - z\rangle - \frac{\alpha}{2}\|x_n^\prime - z\|^2 - \frac{s^2}{2}\|g_n\|^2.
\end{equation}

Let us examine this condition for a moment. Note that if $g_n \in \partial f(x_n^\prime)$, then by the strong convexity, we have
$$\forall z,~f(x_n^\prime) - f(z) \leq \langle g_n, x_n^\prime - z\rangle - \frac{\alpha}{2}\|x_n^\prime - z\|^2.$$
So, in this case, the above condition is equivalent to a decrease in the objective.
\begin{equation}
    f(x'_n - \delta_n) = f(x_n^\prime - \delta_n) \leq f(x_n^\prime) - \frac{s^2}{2}\|g_n\|^2.
\end{equation}

So what the new condition \eqref{stability_decrease_condition} does is simply to allow $g_n\notin \partial f(x_n^\prime)$, but still to enforce
a combined decrease and strong convexity condition. We will see that for many problems of interest, in particular composite
optimization, $g_n$ and $\delta_n$ can be chosen to satisfy this condition. This will lead to an accelerated version of forward-backward
iteration, which is similar to FISTA \cite{beck2009fast} and the methods in \cite{attouch2018convergence}.

Inserting this into the scheme \eqref{discretization_smooth_deterministic}, we arrive at the following discretization.
\begin{equation} \label{discretization_non_smooth_deterministic}
    \begin{split}
        x_n^\prime &= x_n + sv_n \\
        v_n^\prime &= v_n - (1+s\sqrt{\alpha})^{-1}\left(s\sqrt{\alpha}v_n + sg_n\right) - s\sqrt{\alpha}v_n^\prime \\
        x_{n+1} &= x_n^\prime - \delta_n \\
        v_{n+1} &= v_n^\prime + (1+s\sqrt{\alpha})^{-1}\sqrt{\alpha} \delta_n,
    \end{split}
\end{equation}
where $g_n$ and $\delta_n$ are chosen so that \eqref{stability_decrease_condition} holds.

This is the same as in the smooth case, except that the gradients have been replaced by $g_n$ and the sufficient decrease update has been changed.
We now prove that this scheme leads to an accelerated method. The proof is very similar to the smooth case. In this case, we do not even need to
assume the strong convexity of $f$. This assumption is subsumed by \eqref{stability_decrease_condition}.
\begin{theorem}\label{accelerated_gradient_descent_non_smooth}
Assume that $g_n$ and $\delta_n$ are chosen so that the condition \eqref{stability_decrease_condition} holds at every iteration of 
the scheme \ref{discretization_non_smooth_deterministic}.
Then we have
$$L_{n+1} \leq (1 + s\sqrt{\alpha})^{-1}L_n,$$
where $L_n$ is the same Lyapunov function as in the smooth case (equation \eqref{lyapunov_function_smooth}).

In particular, if $v_0 = 0$, we get
    $$f(x_n) - f(x^*) \leq \left(1 + s\sqrt{\alpha}\right)^{-n}\left(f(x_0) - f(x^*) + \frac{\alpha}{2}\|x_0 - x^*\|^2\right).$$
\end{theorem}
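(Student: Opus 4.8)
The plan is to follow the proof of Theorem~\ref{accelerated_gradient_descent} almost verbatim, establishing the one-step contraction $L_{n+1} \le L_n - s\sqrt{\alpha}L_{n+1}$, from which $L_{n+1} \le (1+s\sqrt{\alpha})^{-1}L_n$ and the stated rate follow by induction together with $f(x_n) - f(x^*) \le L_n$. The first two stages of the iteration \eqref{discretization_non_smooth_deterministic}, the forward step in $x$ and the semi-implicit step in $v$, are structurally identical to the smooth case with $g_n$ in place of $\nabla f(x_n^\prime)$, so I would reuse the same two polarization identities and the same bookkeeping to compute $L(x_n^\prime, v_n^\prime) - L_n$.

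Next I would observe that the sufficient-decrease update $x_{n+1} = x_n^\prime - \delta_n$, $v_{n+1} = v_n^\prime + (1+s\sqrt{\alpha})^{-1}\sqrt{\alpha}\delta_n$ is engineered precisely so that the argument $\sqrt{\alpha}(x - x^*) + (1+s\sqrt{\alpha})v$ of the squared-norm term in \eqref{lyapunov_function_smooth} is left invariant; a direct substitution shows the two $\delta_n$ contributions cancel. Hence this update changes $L$ only through its $f$-value, by $f(x_{n+1}) - f(x_n^\prime)$, and adding this to $L(x_n^\prime, v_n^\prime) - L_n$ produces the analogue of \eqref{line248}, now carrying $f(x_{n+1})$ and $g_n$ throughout.

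The heart of the argument is replacing the two places where the smooth proof invoked convexity and strong convexity by two applications of the combined decrease condition \eqref{stability_decrease_condition}. Taking $z = x_n$ bounds the convexity line $f(x_{n+1}) - f(x_n) - \langle g_n, x_n^\prime - x_n\rangle$ by $-\tfrac{\alpha}{2}\|x_n^\prime - x_n\|^2 - \tfrac{s^2}{2}\|g_n\|^2$, whose second term exactly cancels the $+\tfrac{s^2}{2}\|g_n\|^2$ already present. Taking $z = x^*$ gives $\langle g_n, x_n^\prime - x^*\rangle \ge f(x_{n+1}) - f(x^*) + \tfrac{\alpha}{2}\|x_n^\prime - x^*\|^2 + \tfrac{s^2}{2}\|g_n\|^2$, which supplies both the $-s\sqrt{\alpha}(f(x_{n+1}) - f(x^*))$ term and the $-s\sqrt{\alpha}\tfrac{\alpha}{2}\|x_n^\prime - x^*\|^2$ term needed to complete the square, at the cost of a harmless extra $-\tfrac{s^3\sqrt{\alpha}}{2}\|g_n\|^2 \le 0$. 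Completing the square exactly as in the smooth proof, and using the invariance from the previous step to identify the squared-norm part of $L_{n+1}$, all remaining quadratics are manifestly nonpositive and I arrive at $L_{n+1} - L_n \le -s\sqrt{\alpha}L_{n+1}$.

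The main obstacle, and the only genuine departure from the smooth proof, is precisely that $g_n$ need not lie in $\partial f(x_n^\prime)$, so convexity and strong convexity are unavailable as separate inputs; the resolution is the two-point use of \eqref{stability_decrease_condition} above, and the step that requires care is verifying that the various $\|g_n\|^2$ contributions (one cancelling, one nonpositive) bookkeep correctly. Finally, unwinding the induction with $v_0 = 0$ gives $L_0 = f(x_0) - f(x^*) + \tfrac{\alpha}{2}\|x_0 - x^*\|^2$; note that, unlike in Theorem~\ref{accelerated_gradient_descent}, one cannot further bound $\tfrac{\alpha}{2}\|x_0 - x^*\|^2$ by $f(x_0) - f(x^*)$ here, since the strong convexity of $f$ is not assumed but only subsumed into \eqref{stability_decrease_condition}, which is why the stated bound carries no factor of $2$.
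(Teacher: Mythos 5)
Your proposal is correct and follows essentially the same route as the paper's proof: reuse the smooth-case computation with $g_n$ in place of $\nabla f(x_n^\prime)$, note that the sufficient-decrease update preserves the squared-norm term so it only contributes $f(x_{n+1})-f(x_n^\prime)$, and apply \eqref{stability_decrease_condition} at $z=x_n$ and $z=x^*$ before completing the square. Your bookkeeping of the $\|g_n\|^2$ terms and your closing remark about why the bound carries no factor of $2$ both match the paper.
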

\begin{proof}
We proceed exactly as in the proof of theorem \ref{accelerated_gradient_descent}, replacing $\nabla f(x_n^\prime)$ by $g_n$ to obtain,
in place of equation \eqref{line248},
\begin{equation}\label{line334}
    \begin{split}
         L(x_n^\prime, v_n^\prime) - L_n = &f(x_n^\prime) - f(x_n) - \langle g_n, x_n^\prime - x_n\rangle\\
         & - s\sqrt{\alpha}\langle g_n, x_n^\prime - x^*\rangle \\
         & - s\sqrt{\alpha}\langle (1+s\sqrt{\alpha})v_n^\prime, \sqrt{\alpha}(x^\prime_n - x^*) + (1+s\sqrt{\alpha})v_n^\prime\rangle \\
         & + \frac{s^2}{2}\|g_n\|^2 - \frac{s^2\alpha}{2}\|v_n^\prime\|^2.
    \end{split}
\end{equation}
Noting that by the construction of the sufficient decrease update (the last two lines of \eqref{discretization_non_smooth_deterministic}), $$L_{n+1} - L(x_n^\prime, v_n^\prime) = f(x_{n+1}) - f(x_n^\prime),$$
we get
\begin{equation}
    \begin{split}
        L_{n+1} - L_n = &f(x_{n+1}) - f(x_n) - \langle g_n, x_n^\prime - x_n\rangle\\
         & - s\sqrt{\alpha}\langle g_n, x_n^\prime - x^*\rangle \\
         & - s\sqrt{\alpha}\langle (1+s\sqrt{\alpha})v_n^\prime, \sqrt{\alpha}(x^\prime_n - x^*) + (1+s\sqrt{\alpha})v_n^\prime\rangle \\
         & + \frac{s^2}{2}\|g_n\|^2 - \frac{s^2\alpha}{2}\|v_n^\prime\|^2.
    \end{split}
\end{equation}
We now apply the decrease condition \eqref{stability_decrease_condition} with $z = x_n$ and $z = x^*$ to the first two lines of this equation.
This gives
\begin{equation}
    \begin{split}
        L_{n+1} - L_n \leq &-\frac{\alpha}{2}\|x_n^\prime - x_n\|^2 - \frac{s^2}{2}\|g_n\|^2\\
         & - s\sqrt{\alpha}\left(f(x_{n+1}) - f(x^*) + \frac{\alpha}{2}\|x_n^\prime - x^*\|^2 + \frac{s^2}{2}\|g_n\|^2\right) \\
         & - s\sqrt{\alpha}\langle (1+s\sqrt{\alpha})v_n^\prime, \sqrt{\alpha}(x^\prime_n - x^*) + (1+s\sqrt{\alpha})v_n^\prime\rangle \\
         & + \frac{s^2}{2}\|g_n\|^2 - \frac{s^2\alpha}{2}\|v_n^\prime\|^2.
    \end{split}
\end{equation}
Completing the square and noting that the sufficient decrease update (the last two lines of \eqref{discretization_non_smooth_deterministic}) is designed so that 
$$\frac{1}{2}\|\sqrt{\alpha}(x_n^\prime - x^*) + (1+s\sqrt{\alpha})v_n^\prime\|^2 = \frac{1}{2}\|\sqrt{\alpha}(x_{n+1} - x^*) + (1+s\sqrt{\alpha})v_{n+1}\|^2,$$
we see that
\begin{equation}
    L_{n+1} - L_n \leq -s\sqrt{\alpha}L_{n+1}
\end{equation}
as desired.

As in the proof of Theorem \ref{accelerated_gradient_descent}, by induction we have 
$$f(x_n) - f(x^*) \leq L_n \leq \left(1 + s\sqrt{\alpha}\right)^{-n}L_0.$$
Finally, if $v_0 = 0$, we get
$$L_0 = f(x_0) - f(x^*) + \frac{\alpha}{2}\|x_0 - x^*\|^2,$$
which proves the second statement.
\end{proof}

\subsection{Accelerated Forward-Backward Splitting}
In this subsection, we apply theorem \ref{accelerated_gradient_descent_non_smooth} to strongly convex composite objectives, 
i.e. objectives of the form
\begin{equation}
    f(x) = g(x) + h(x),
\end{equation}
where $g$ is $\alpha$-strongly convex and $L$-smooth, and $h$ is an arbitrary convex function. We also assume that we are
able to compute a proximal update for $h$, i.e. solve
\begin{equation}\label{proximal_update}
    y^* = \text{prox}_{s,h}(x) = \argmin_y h(y) + \frac{1}{2s}\|y-x\|^2
\end{equation}
The proximal update is essentially a step of backward Euler with step size $s$, hence the name accelerated forward-backward method.
For many convex functions of interest, the proximal update can be efficiently computed.

For example, if $h$ is the characteristic function
of a convex set $S$, then \eqref{proximal_update} is just a projection onto $S$. In this case, Theorem \ref{accelerated_gradient_descent_non_smooth} 
recovers a version of accelerated projected gradient descent for strongly convex objectives.

Another example of interest is $h(x) = \|x\|_1$, in which case \eqref{proximal_update} is just soft-thresholding with parameter $s$. In this instance
Theorem \ref{accelerated_gradient_descent_non_smooth} recovers a version of FISTA \cite{beck2009fast}, and more generally for other $h$ we obtain a version of accelerated forward-backward descent \cite{attouch2018convergence}, which are designed for strongly convex objectives.

Our goal is to show how $g_n$ and $\delta_n$ can be chosen to satisfy the decrease condition \eqref{stability_decrease_condition}. The next lemma
answers this question for us.
\begin{lemma}\label{forward-backward-lemma}
    Assume that $f(x) = g(x) + h(x)$, with $g$ an $L$-smooth, $\alpha$-strongly convex function and $h$ a convex function. Then setting
    $$g_n = \frac{1}{s^2}\left(x_n^\prime - \text{\normalfont prox}_{s^2,h}\left(x_n^\prime - s^2\nabla g(x_n^\prime)\right)\right)$$
    and $\delta_n = s^2g_n$ will satisfy the condition \eqref{stability_decrease_condition} as long as $s\leq \frac{1}{\sqrt{L}}$.
\end{lemma}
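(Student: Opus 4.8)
The plan is to recognize that the two prescribed updates are engineered precisely so that $x_{n+1}$ coincides with a proximal point, and then to read off a subgradient of $h$ from the optimality condition of that proximal step. Concretely, since $\delta_n = s^2 g_n$, the third line of \eqref{discretization_non_smooth_deterministic} gives $x_{n+1} = x_n^\prime - s^2 g_n = \text{prox}_{s^2,h}(x_n^\prime - s^2\nabla g(x_n^\prime))$. First I would write down the first-order optimality condition for this proximal minimization, namely $0 \in \partial h(x_{n+1}) + \frac{1}{s^2}\bigl(x_{n+1} - (x_n^\prime - s^2\nabla g(x_n^\prime))\bigr)$. Simplifying the second term using $x_{n+1} - x_n^\prime = -s^2 g_n$ shows that $g_n - \nabla g(x_n^\prime) \in \partial h(x_{n+1})$. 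This is the crucial identity: it exhibits $g_n$ as the sum of the smooth gradient at $x_n^\prime$ and a genuine subgradient of $h$ at the new point $x_{n+1}$, which is exactly what lets $g_n$ play the role of a surrogate gradient for the composite objective.

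With this subgradient in hand, I would bound $f(x_{n+1}) - f(z) = [g(x_{n+1}) - g(z)] + [h(x_{n+1}) - h(z)]$ by three elementary estimates. For the $g$-part, I split $g(x_{n+1}) - g(z) = [g(x_{n+1}) - g(x_n^\prime)] + [g(x_n^\prime) - g(z)]$, bounding the first bracket from above by $L$-smoothness (evaluated at $x_{n+1} = x_n^\prime - s^2 g_n$) and the second by the $\alpha$-strong convexity of $g$ expanded about $x_n^\prime$. For the $h$-part I use convexity of $h$ together with the subgradient $g_n - \nabla g(x_n^\prime)$ from the previous step. Adding the three inequalities, the linear terms combine as $\nabla g(x_n^\prime) + (g_n - \nabla g(x_n^\prime)) = g_n$, so the whole right-hand side collapses to $\langle g_n, x_{n+1} - z\rangle$ plus the strong-convexity penalty $-\frac{\alpha}{2}\|x_n^\prime - z\|^2$ plus the smoothness remainder $\frac{L s^4}{2}\|g_n\|^2$.

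The final step is bookkeeping with the step size. Rewriting $x_{n+1} - z = (x_n^\prime - z) - s^2 g_n$ converts $\langle g_n, x_{n+1} - z\rangle$ into $\langle g_n, x_n^\prime - z\rangle - s^2\|g_n\|^2$, matching the inner-product term in \eqref{stability_decrease_condition} exactly. What then remains on the $\|g_n\|^2$ terms is $-s^2\|g_n\|^2 + \frac{L s^4}{2}\|g_n\|^2$, and I would note that $s \leq \frac{1}{\sqrt{L}}$ gives $L s^2 \leq 1$, hence $-s^2 + \frac{L s^4}{2} \leq -\frac{s^2}{2}$, which is precisely the coefficient required by \eqref{stability_decrease_condition}.

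The main obstacle I anticipate is not any single inequality — each is standard — but the careful reconciliation of the two different step-size scales appearing in the scheme: the position and velocity updates use $s$, whereas the proximal operator and the correction $\delta_n$ use $s^2$. Getting the gradient terms to telescope into exactly $g_n$, and the norm coefficients to land on $-\frac{s^2}{2}$ under the hypothesis $s \leq 1/\sqrt{L}$, both hinge on tracking these factors of $s^2$ correctly; this is where I would be most careful.
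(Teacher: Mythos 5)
Your proposal is correct and follows essentially the same route as the paper's proof: identify $x_{n+1}$ as the proximal point, extract the subgradient $g_n - \nabla g(x_n^\prime) \in \partial h(x_{n+1})$ from the optimality condition, split $f(x_{n+1}) - f(z)$ into the same three elementary bounds, and shift the inner product from $x_{n+1}-z$ to $x_n^\prime - z$ at the end. The only cosmetic difference is that you invoke $s \le 1/\sqrt{L}$ at the final bookkeeping step, whereas the paper substitutes $\frac{L}{2} \le \frac{1}{2s^2}$ directly into the smoothness bound; the two are equivalent.
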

Again it is important that we are able to choose $s$ as large as $\frac{1}{\sqrt{L}}$, since this results in the accelerated rate of $(1+\kappa^{-\frac{1}{2}})^{-n}$.
\begin{proof}
Note that the choice of $g_n$ and $\delta_n$ implies that
\begin{equation}
    x_{n+1} = \argmin_x h(x) + \frac{1}{2s^2}\|x - \left(x_n^\prime - s^2\nabla g(x_n^\prime)\right)\|^2.
\end{equation}
    We denote by $y_n$ the intermediate point $x_n^\prime - s^2\nabla g(x_n^\prime)$ so that the above becomes
    \begin{equation}\label{line251}
        x_{n+1} = \argmin_x h(x) + \frac{1}{2s^2}\|x - y_n\|^2.
    \end{equation}
    Now let $z$ be arbitrary and note that our goal is to bound
    $$f(x_{n+1}) - f(z) = (g(x_{n+1}) - g(z)) + (h(x_{n+1}) - h(z)).$$
    We consider the second of these terms first. Note that equation \eqref{line251} implies that $L(y_n - x_{n+1}) \in \partial h(x_{n+1})$.
    This means that (since $h$ is convex)
    \begin{equation}\label{line258}
        h(x_{n+1}) - h(z) \leq \frac{1}{s^2}\langle (y_n - x_{n+1}), x_{n+1} - z\rangle.
    \end{equation}
    We proceed to bound
    \begin{equation}
        g(x_{n+1}) - g(z) = (g(x_{n+1}) - g(x_n^\prime)) + (g(x_n^\prime) - g(z)).
    \end{equation}
    The first term above is bounded due to the $L$-smoothness of $g$ and the assumption that $s^2\leq \frac{1}{L}$:
    $$g(x_{n+1}) - g(x_n^\prime) \leq \langle\nabla g(x_n^\prime), x_{n+1} - x_n^\prime\rangle + \frac{1}{2s^2}\|x_{n+1} - x_n^\prime\|^2.$$
    The second is bounded due to the strong convexity of $g$:
    $$g(x_n^\prime) - g(z) \leq \langle\nabla g(x_n^\prime), x_n^\prime - z\rangle - \frac{\alpha}{2}\|x_n^\prime - z\|^2.$$
    Combining these two bounds with equation \eqref{line258} and noting that 
    $$g_n = \frac{1}{s^2}(x^\prime_n - x_{n+1}) = \frac{1}{s^2}(y_n - x_{n+1}) + \nabla g(x_n^\prime),$$ 
    we obtain
    \begin{equation}
        f(x_{n+1}) - f(z) \leq \langle g_n, x_{n+1} - z\rangle - \frac{\alpha}{2}\|x_n^\prime - z\|^2 + \frac{s^2}{2}\|g_n\|^2.
    \end{equation}
    Now we write $$x_{n+1} - z = x_{n+1} - x_n^\prime + x_n^\prime - z = -s^2g_n + x_n^\prime - z,$$
    to get
    \begin{equation}
        f(x_{n+1}) - f(z) \leq \langle g_n, x_n^\prime - z\rangle - \frac{\alpha}{2}\|x_n^\prime - z\|^2 - \frac{s^2}{2}\|g_n\|^2,
    \end{equation}
    which is exactly \eqref{stability_decrease_condition}.
\end{proof}

\section{Discrete Stochastic Dynamics}\label{stochastic-section}
In this section, we extend our theory to what we call stochastic discretizations. By this we simply mean schemes which introduce randomness
in each iteration. These are not really discretizations of the dynamics \eqref{damped_hamiltonian_dynamics} in a strict sense of the term, but lead to a class of accelerated methods nonetheless. The important new step here is to modify the sufficient decrease update appropriately when the gradient is sampled randomly in a certain sense. To illustrate the ideas, we derive a variant of accelerated coordinate descent. 

We begin by considering smooth objectives $f$. Recall that for smooth objectives the sufficient decrease update was critical for obtaining a stable accelerated method. What we needed was
a way of decreasing the objective sufficiently by perturbing $x$. Then we could perturb $v$ appropriately to keep the second term in
our Lyapunov function constant. In this way, we obtained an update of the form \eqref{decrease_update}, which reduced the Lyapunov function
by at least $\frac{s^2}{2}\|\nabla f(x_n^\prime)\|^2$.

When deriving stochastic accelerated methods, we want to replace $\nabla f(x_n^\prime)$ by some sample $g_n$, where 
$\mathbb{E}_n(g_n) = \nabla f(x_n^\prime)$ (here $\mathbb{E}_n$ denotes the expectation taken with respect to randomness introduced
in iteration $n$, essentially a conditional expectation). 
It turns out that this will work as long as we can guarantee an objective decrease of at least
$\frac{s^2}{2}\|g_n\|^2$. Note here that there is no expectation inside of the norm, i.e. this is the norm of the actual gradient sample
encountered at iteration $n$.

We consider the following discretization:
\begin{equation} \label{discretization_smooth_stochastic}
    \begin{split}
        x_n^\prime &= x_n + sv_n \\
        v_n^\prime &= v_n - (1+s\sqrt{\alpha})^{-1}\left(s\sqrt{\alpha}v_n + sg_n\right) - s\sqrt{\alpha}v_n^\prime \\
        x_{n+1} &= x_n^\prime - \delta_n \\
        v_{n+1} &= v_n^\prime + (1+s\sqrt{\alpha})^{-1}\sqrt{\alpha} \delta_n,
    \end{split}
\end{equation}
where $\mathbb{E}_n(g_n) = \nabla f(x_n^\prime)$ and $\delta_n$ is chosen (dependent on $g_n$) so that
\begin{equation}\label{smooth_decrease_condition}
    f(x_{n+1}) \leq f(x_n^\prime) - \frac{s^2}{2}\|g_n\|^2.
\end{equation}
We show that this method will achieve an accelerated convergence rate under this conditions.
\begin{theorem}\label{accelerated_stochastic_smooth}
    Assume that $f$ is $\alpha$-strongly convex and differentiable. Then as long as condition \eqref{smooth_decrease_condition} holds, the iterates
    of \eqref{discretization_smooth_stochastic} will satisfy
    $$\mathbb{E}_n(L_{n+1}) \leq (1 + s\sqrt{\alpha})^{-1}L_n,$$
    where $L_n$ is the same Lyapunov function introduced in \eqref{lyapunov_function_smooth}.
    
    In particular, if $v_0 = 0$ we have
    $$\mathbb{E}(f(x_n) - f(x^*)) \leq 2(1 + s\sqrt{\alpha})^{-n}(f(x_0) - f(x^*)).$$
\end{theorem}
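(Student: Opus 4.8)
The plan is to follow the proofs of Theorems \ref{accelerated_gradient_descent} and \ref{accelerated_gradient_descent_non_smooth} almost verbatim, carrying the random sample $g_n$ in place of $\nabla f(x_n^\prime)$, and to insert a single conditional expectation $\mathbb{E}_n$ at precisely the right moment. Since the forward step in $x$ and the semi-implicit step in $v$ in \eqref{discretization_smooth_stochastic} are identical to those of \eqref{discretization_non_smooth_deterministic} with the same $g_n$, the computation of the change in the Lyapunov function under these two steps goes through unchanged, reproducing the identity \eqref{line334} as a \emph{pathwise} statement (valid for each realization of $g_n$).

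First I would combine this identity with the sufficient decrease update. As in the non-smooth case, the last two lines of \eqref{discretization_smooth_stochastic} are designed so that the quadratic part of the Lyapunov function is preserved,
\begin{equation}
  \frac{1}{2}\|\sqrt{\alpha}(x_n^\prime - x^*) + (1+s\sqrt{\alpha})v_n^\prime\|^2 = \frac{1}{2}\|\sqrt{\alpha}(x_{n+1} - x^*) + (1+s\sqrt{\alpha})v_{n+1}\|^2,
\end{equation}
so that $L_{n+1} - L(x_n^\prime, v_n^\prime) = f(x_{n+1}) - f(x_n^\prime)$. The crucial point is to invoke the decrease condition \eqref{smooth_decrease_condition} here, pathwise: it gives $f(x_{n+1}) - f(x_n^\prime) \leq -\frac{s^2}{2}\|g_n\|^2$, which cancels the $+\frac{s^2}{2}\|g_n\|^2$ term appearing in the Lyapunov change. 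This cancellation must happen \emph{before} any expectation is taken, since $\|g_n\|^2$ is the norm of the actual sample; by Jensen's inequality $\mathbb{E}_n\|g_n\|^2 \geq \|\nabla f(x_n^\prime)\|^2$ points the wrong way and could not be controlled otherwise. After the cancellation, the only remaining dependence on $g_n$ is linear, through the inner products $\langle g_n, x_n^\prime - x_n\rangle$ and $\langle g_n, x_n^\prime - x^*\rangle$, together with the terms involving $v_n^\prime$.

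Now I would take the conditional expectation $\mathbb{E}_n$. Since $x_n$, $x_n^\prime$, and $x^*$ are determined before iteration $n$, unbiasedness $\mathbb{E}_n(g_n) = \nabla f(x_n^\prime)$ replaces $g_n$ by $\nabla f(x_n^\prime)$ in both inner products; the quantity $f(x_n^\prime) - f(x_n) - \langle\nabla f(x_n^\prime), x_n^\prime - x_n\rangle$ is then $\leq 0$ by convexity, and the strong convexity bound $\langle\nabla f(x_n^\prime), x_n^\prime - x^*\rangle \geq f(x_n^\prime) - f(x^*) + \frac{\alpha}{2}\|x_n^\prime - x^*\|^2$ applies exactly as in Theorem \ref{accelerated_gradient_descent}. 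The terms containing $v_n^\prime$ are random and must be kept inside $\mathbb{E}_n$, but the completing-the-square manipulation used in the deterministic proof is purely algebraic and holds for every realization, so it may be applied under the expectation; it produces $-s\sqrt{\alpha}\,\mathbb{E}_n[\frac{1}{2}\|\sqrt{\alpha}(x_n^\prime-x^*)+(1+s\sqrt{\alpha})v_n^\prime\|^2]$ plus strictly negative multiples of $\mathbb{E}_n\|v_n^\prime\|^2$, which are simply discarded. Combining the preserved quadratic part with $f(x_{n+1}) \leq f(x_n^\prime)$ and the identity above lower-bounds the collected negative terms by $-s\sqrt{\alpha}\,\mathbb{E}_n(L_{n+1})$, yielding $\mathbb{E}_n(L_{n+1}) - L_n \leq -s\sqrt{\alpha}\,\mathbb{E}_n(L_{n+1})$, i.e. $\mathbb{E}_n(L_{n+1}) \leq (1+s\sqrt{\alpha})^{-1}L_n$.

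The main obstacle, and the genuinely new feature relative to the deterministic arguments, is exactly this ordering: the $\|g_n\|^2$ term must be eliminated by the pathwise decrease condition before expectations are taken, while the linear-in-$g_n$ terms must survive until the expectation so that unbiasedness and convexity can be applied. Once the one-step contraction is established, the final bound follows by the tower property: taking full expectations and iterating gives $\mathbb{E}(L_n) \leq (1+s\sqrt{\alpha})^{-n}L_0$, and since $f(x_n) - f(x^*) \leq L_n$ while $v_0 = 0$ forces $L_0 = f(x_0) - f(x^*) + \frac{\alpha}{2}\|x_0-x^*\|^2 \leq 2(f(x_0)-f(x^*))$ by strong convexity, the stated rate follows.
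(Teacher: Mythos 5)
Your proof is correct and follows essentially the same route as the paper: the paper obtains this theorem as a special case of Theorem \ref{accelerated_gradient_descent_non_smooth_non_deterministic} (after noting that unbiasedness, strong convexity, and the pathwise decrease \eqref{smooth_decrease_condition} together imply the general condition \eqref{stability_decrease_condition_non_deterministic}), and the Lyapunov computation there is exactly yours --- the $\frac{s^2}{2}\|g_n\|^2$ term is cancelled pathwise by the decrease condition before any expectation is taken, while the linear-in-$g_n$ terms survive until $\mathbb{E}_n$ is applied so that unbiasedness and (strong) convexity can be used. Your direct specialization, including the observation that Jensen's inequality forces this ordering, is a faithful unpacking of the paper's argument.
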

This theorem will follow as a special case of the theorem for non-smooth schemes, so we omit the proof for the moment.

We now turn to stochastic schemes for non-smooth functions. The idea is very similar to the deterministic scheme for 
non-smooth functions. We want to choose $g_n$ as a random sample of an element in the subgradient $\partial f(x_n^\prime)$,
however, we don't restrict $\mathbb{E}_n(g_n)\in \partial f(x_n^\prime)$. Instead, we enforce a stochastic version of 
the constraint \eqref{stability_decrease_condition}. The scheme we consider is same as \eqref{discretization_non_smooth_deterministic}
\begin{equation} \label{discretization_non_smooth_non_deterministic}
    \begin{split}
        x_n^\prime &= x_n + sv_n \\
        v_n^\prime &= v_n - (1+s\sqrt{\alpha})^{-1}\left(s\sqrt{\alpha}v_n + sg_n\right) - s\sqrt{\alpha}v_n^\prime \\
        x_{n+1} &= x_n^\prime - \delta_n \\
        v_{n+1} &= v_n^\prime + (1+s\sqrt{\alpha})^{-1}\sqrt{\alpha} \delta_n,
    \end{split}
\end{equation}
except that we allow $g_n$ and $\delta_n$ to be (potentially dependent) random variables and enforce the following condition
\begin{equation} \label{stability_decrease_condition_non_deterministic}
    \forall z,~f(x_{n+1}) - f(z) \leq \langle \mathbb{E}_n(g_n), x_n^\prime - z\rangle - \frac{\alpha}{2}\|x_n^\prime - z\|^2 - \frac{s^2}{2}\|g_n\|^2.
\end{equation}
which only differs from \eqref{stability_decrease_condition} in the expectation taken in the first inner product.

Note first that the schemes \eqref{discretization_smooth_stochastic} and \eqref{discretization_non_smooth_non_deterministic}
are exactly the same, except that the condition \eqref{stability_decrease_condition_non_deterministic} is weaker than the
conditions enforced in the smooth case. This follows since if $f$ is differentiable and $\alpha$-strongly
convex, then
$$\forall z,~f(x_n^\prime) - f(z) \leq \langle \nabla f(x_n^\prime), x_n^\prime - z\rangle - \frac{\alpha}{2}\|x_n^\prime - z\|^2.$$
This, combined with the requirement in \eqref{discretization_smooth_stochastic} that $\mathbb{E}_n(g_n) = \nabla f(x_n^\prime)$ and
that $\delta_n$ is chosen so that \eqref{smooth_decrease_condition} holds, implies the condition \eqref{stability_decrease_condition_non_deterministic}.
This means that the proof below will imply Theorem \ref{accelerated_stochastic_smooth}.
In fact, the scheme \eqref{discretization_non_smooth_non_deterministic} is most often applied in this way to smooth, strongly convex functions.
In this case it leads to different versions of accelerated coordinate descent, as we will show later.

We now prove that the scheme \eqref{discretization_non_smooth_non_deterministic} achieves the desired accelerated convergence rate.
As in the non-smooth deterministic case, we don't need to assume that our objective is strongly convex. This assumption is
superseded by condition \eqref{stability_decrease_condition_non_deterministic}.
\begin{theorem}\label{accelerated_gradient_descent_non_smooth_non_deterministic}
Assume that $g_n$ and $\delta_n$ are chosen so that the condition \eqref{stability_decrease_condition_non_deterministic} holds at every iteration of 
the scheme \eqref{discretization_non_smooth_non_deterministic}.
Then we will have
$$\mathbb{E}_n(L_{n+1}) \leq (1 + s\sqrt{\alpha})^{-1}L_n,$$
where $L_n$ is the same Lyapunov function as in the smooth case (equation \eqref{lyapunov_function_smooth}).

In particular, if $v_0 = 0$, we will have
$$\mathbb{E}(f(x_n) - f(x^*)) \leq \left(1 + s\sqrt{\alpha}\right)^{-n}\left(f(x_0) - f(x^*) + \frac{\alpha}{2}\|x_0 - x^*\|^2\right).$$
\end{theorem}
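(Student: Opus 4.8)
The plan is to mirror the proof of Theorem \ref{accelerated_gradient_descent_non_smooth} essentially verbatim, inserting a conditional expectation $\mathbb{E}_n$ at exactly the right place. The crucial observation is that the algebraic identity \eqref{line334} — obtained by expanding $L(x_n^\prime, v_n^\prime) - L_n$ via the forward step in $x$ and the semi-implicit step in $v$, together with the sufficient decrease relation $L_{n+1} - L(x_n^\prime, v_n^\prime) = f(x_{n+1}) - f(x_n^\prime)$ — is purely algebraic and therefore holds pathwise, for each realization of the randomness at step $n$. So I would begin by writing down, exactly as in the deterministic case, the pathwise identity for $L_{n+1} - L_n$ with $\nabla f(x_n^\prime)$ replaced by the random sample $g_n$.

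First I would take the conditional expectation $\mathbb{E}_n$ of this identity. The key point is that the two linear terms $\langle g_n, x_n^\prime - x_n\rangle$ and $\langle g_n, x_n^\prime - x^*\rangle$ have history-measurable second arguments (given the randomness before step $n$, both $x_n^\prime - x_n = sv_n$ and $x_n^\prime - x^*$ are fixed), so their expectations are $\langle \mathbb{E}_n(g_n), x_n^\prime - x_n\rangle$ and $\langle \mathbb{E}_n(g_n), x_n^\prime - x^*\rangle$. By contrast, the quadratic term $\frac{s^2}{2}\|g_n\|^2$ stays as $\frac{s^2}{2}\mathbb{E}_n\|g_n\|^2$ and cannot be simplified. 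This is precisely the asymmetry for which the condition \eqref{stability_decrease_condition_non_deterministic} was tailored: it places $\mathbb{E}_n(g_n)$ inside the inner product but keeps the un-averaged $\|g_n\|^2$ in the quadratic penalty.

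Next I would apply \eqref{stability_decrease_condition_non_deterministic} with $z = x_n$ and with $z = x^*$, in each case taking $\mathbb{E}_n$ of the resulting inequality. The choice $z = x_n$ bounds $\mathbb{E}_n(f(x_{n+1})) - f(x_n)$ by $\langle \mathbb{E}_n(g_n), x_n^\prime - x_n\rangle - \frac{\alpha}{2}\|x_n^\prime - x_n\|^2 - \frac{s^2}{2}\mathbb{E}_n\|g_n\|^2$, matching the first linear term; the choice $z = x^*$, scaled by $s\sqrt{\alpha}$, converts $-s\sqrt{\alpha}\langle \mathbb{E}_n(g_n), x_n^\prime - x^*\rangle$ into $-s\sqrt{\alpha}\,\mathbb{E}_n(f(x_{n+1}) - f(x^*)) - \frac{s\sqrt{\alpha}\alpha}{2}\|x_n^\prime - x^*\|^2 - \frac{s^3\sqrt{\alpha}}{2}\mathbb{E}_n\|g_n\|^2$. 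Collecting the three $\mathbb{E}_n\|g_n\|^2$ contributions, the $+\frac{s^2}{2}$ from the identity cancels the $-\frac{s^2}{2}$ from the $z = x_n$ application, leaving the nonpositive residual $-\frac{s^3\sqrt{\alpha}}{2}\mathbb{E}_n\|g_n\|^2$, exactly as the single $\|g_n\|^2$ term behaved in the deterministic proof.

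Finally I would complete the square in the $v_n^\prime$ terms. This step is pathwise and identical to the deterministic case: the sufficient decrease update (the last two lines of \eqref{discretization_non_smooth_non_deterministic}) is designed so that $\frac{1}{2}\|\sqrt{\alpha}(x_n^\prime - x^*) + (1+s\sqrt{\alpha})v_n^\prime\|^2 = \frac{1}{2}\|\sqrt{\alpha}(x_{n+1} - x^*) + (1+s\sqrt{\alpha})v_{n+1}\|^2$, so the completed square combines with $-s\sqrt{\alpha}\,\mathbb{E}_n(f(x_{n+1}) - f(x^*))$ to form $-s\sqrt{\alpha}\,\mathbb{E}_n(L_{n+1})$, while the leftover terms $-\frac{\alpha}{2}\|x_n^\prime - x_n\|^2$, the residual $\|g_n\|^2$ term, and the $v_n^\prime$ quadratics are all nonpositive. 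This yields $\mathbb{E}_n(L_{n+1}) - L_n \leq -s\sqrt{\alpha}\,\mathbb{E}_n(L_{n+1})$, i.e. $\mathbb{E}_n(L_{n+1}) \leq (1+s\sqrt{\alpha})^{-1}L_n$. To reach the stated global rate I would take total expectations and iterate by the tower property, $\mathbb{E}(L_n) \leq (1+s\sqrt{\alpha})^{-1}\mathbb{E}(L_{n-1})$, then use $f(x_n) - f(x^*) \leq L_n$ together with $L_0 = f(x_0) - f(x^*) + \frac{\alpha}{2}\|x_0 - x^*\|^2$ when $v_0 = 0$. I expect the main obstacle to be purely bookkeeping: keeping straight which occurrences of $g_n$ survive the expectation as $\mathbb{E}_n(g_n)$ (the linear ones, whose partners are history-measurable) versus as $\mathbb{E}_n\|g_n\|^2$ (the quadratic one), and confirming that the random $v_n^\prime$ can be carried through $\mathbb{E}_n$ without interfering with the pathwise completion of the square.
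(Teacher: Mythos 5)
Your proposal is correct and follows essentially the same route as the paper's proof: the same pathwise identity \eqref{line334}/\eqref{line545}, the same application of \eqref{stability_decrease_condition_non_deterministic} at $z=x_n$ and $z=x^*$, the same completion of the square using the invariance of $\frac{1}{2}\|\sqrt{\alpha}(x_n^\prime-x^*)+(1+s\sqrt{\alpha})v_n^\prime\|^2$ under the sufficient decrease update, and the same tower-property iteration. The only (immaterial) difference is ordering: the paper applies the decrease condition pathwise, carrying the residual terms $\langle g_n-\mathbb{E}_n(g_n),\cdot\rangle$ until a final application of $\mathbb{E}_n$ annihilates them, whereas you take $\mathbb{E}_n$ first and exploit the measurability of $x_n^\prime-x_n$ and $x_n^\prime-x^*$ up front.
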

\begin{proof}[Proof of Theorem \ref{accelerated_gradient_descent_non_smooth_non_deterministic}]
As in the proof of Theorem \ref{accelerated_gradient_descent_non_smooth}, we obtain (equation \eqref{line334})
\begin{equation}\label{line545}
    \begin{split}
         L(x_n^\prime, v_n^\prime) - L_n = &f(x_n^\prime) - f(x_n) - \langle g_n, x_n^\prime - x_n\rangle\\
         & - s\sqrt{\alpha}\langle g_n, x_n^\prime - x^*\rangle \\
         & - s\sqrt{\alpha}\langle (1+s\sqrt{\alpha})v_n^\prime, \sqrt{\alpha}(x^\prime_n - x^*) + (1+s\sqrt{\alpha})v_n^\prime\rangle \\
         & + \frac{s^2}{2}\|g_n\|^2 - \frac{s^2\alpha}{2}\|v_n^\prime\|^2.
    \end{split}
\end{equation}
We note that the sufficient decrease update (the last two lines of \eqref{accelerated_gradient_descent_non_smooth_non_deterministic}) implies that $L_{n+1} - L(x_n^\prime, v_n^\prime) = f(x_{n+1}) - f(x_n^\prime)$, so that
\begin{equation}
    \begin{split}
         L_{n+1} - L_n = &f(x_{n+1}) - f(x_n) - \langle g_n, x_n^\prime - x_n\rangle\\
         & - s\sqrt{\alpha}\langle g_n, x_n^\prime - x^*\rangle \\
         & - s\sqrt{\alpha}\langle (1+s\sqrt{\alpha})v_n^\prime, \sqrt{\alpha}(x^\prime_n - x^*) + (1+s\sqrt{\alpha})v_n^\prime\rangle \\
         & + \frac{s^2}{2}\|g_n\|^2 - \frac{s^2\alpha}{2}\|v_n^\prime\|^2.
    \end{split}
\end{equation}
We now apply the decrease condition \eqref{stability_decrease_condition_non_deterministic} with $z = x_n$ and $z = x^*$ to the first two lines of this equation. Because we have an expectation $\mathbb{E}_n(g_n)$ in the condition \eqref{stability_decrease_condition_non_deterministic}, this results in
extra terms which are an inner produce with the difference $g_n - \mathbb{E}_n(g_n)$.
We get
\begin{equation}
    \begin{split}
        L_{n+1} - L_n \leq &-\frac{\alpha}{2}\|x_n^\prime - x_n\|^2 - \frac{s^2}{2}\|g_n\|^2 - \langle g_n - \mathbb{E}_n(g_n), x_n^\prime - x_n\rangle\\
         & - s\sqrt{\alpha}\left(f(x_{n+1}) - f(x^*) + \frac{\alpha}{2}\|x_n^\prime - x^*\|^2 + \frac{s^2}{2}\|g_n\|^2\right) \\
         & - s\sqrt{\alpha}\langle g_n - \mathbb{E}_n(g_n), x_n^\prime - x^*\rangle\\
         & - s\sqrt{\alpha}\langle (1+s\sqrt{\alpha})v_n^\prime, \sqrt{\alpha}(x^\prime_n - x^*) + (1+s\sqrt{\alpha})v_n^\prime\rangle \\
         & + \frac{s^2}{2}\|g_n\|^2 - \frac{s^2\alpha}{2}\|v_n^\prime\|^2.
    \end{split}
\end{equation}
Collecting terms and completing the square as in the previous proofs, we get (throwing out unnecessary negative terms)
\begin{equation}
    \begin{split}
        L_{n+1} - L_n \leq &-s\sqrt{\alpha}\left(f(x_{n+1} - f(x^*) + \frac{1}{2}\|\sqrt{\alpha}(x^\prime_n - x^*) + (1+s\sqrt{\alpha})v^\prime_n\|^2\right)\\
        & - \langle g_n - \mathbb{E}_n(g_n), x_n^\prime - x_n\rangle\\
        & - s\sqrt{\alpha}\langle g_n - \mathbb{E}_n(g_n), x_n^\prime - x^*\rangle.
    \end{split}
\end{equation}
Recalling that the sufficient decrease update (the last two lines of \eqref{accelerated_gradient_descent_non_smooth_non_deterministic}) was designed so that
$$\frac{1}{2}\|\sqrt{\alpha}(x^\prime_n - x^*) + (1+s\sqrt{\alpha})v^\prime_n\|^2 = \frac{1}{2}\|\sqrt{\alpha}(x_{n+1} - x^*) + (1+s\sqrt{\alpha})v_{n+1}\|^2,$$
we see that
\begin{equation}
    \begin{split}
        L_{n+1} - L_n \leq &-s\sqrt{\alpha}L_{n+1}\\
        & - \langle g_n - \mathbb{E}_n(g_n), x_n^\prime - x_n\rangle\\
        & - s\sqrt{\alpha}\langle g_n - \mathbb{E}_n(g_n), x_n^\prime - x^*\rangle.
    \end{split}
\end{equation}
Finally, we take the expectation $\mathbb{E}_n$ on both sides to obtain
    $$\mathbb{E}_n(L_{n+1}) - L_n \leq -s\sqrt{\alpha}\mathbb{E}_n(L_{n+1}).$$
which implies
\begin{equation}
    \mathbb{E}_n(L_{n+1}) \leq (1 + s\sqrt{\alpha})^{-1}L_n,
\end{equation}
as desired.

Taking the expectation with respect to the randomness introduced in previous iterations, we obtain
$$\mathbb{E}(L_{n+1}) \leq (1 + s\sqrt{\alpha})^{-1}\mathbb{E}(L_n),$$
so that
$$\mathbb{E}(f(x_n) - f(x^*)) \leq \mathbb{E}(L_{n+1}) \leq (1 + s\sqrt{\alpha})^{-n}L_0.$$
Finally, if $v_0 = 0$ we get
$$L_0 \leq f(x_0) - f(x^*) + \frac{\alpha}{2}\|x_0 - x^*\|^2,$$
which completes the proof of the second statement. Note that if $f$ is $\alpha$-strongly convex, then
$$L_0 \leq 2(f(x_0) - f(x^*)),$$
which applies in the case of Theorem \ref{accelerated_stochastic_smooth}.
\end{proof}
\subsection{Accelerated Coordinate Descent}
We now show how two versions of accelerated coordinate descent follow as special cases of Theorem
\ref{accelerated_stochastic_smooth}. 

The premise we consider is that the objective $f$ is $\alpha$-strongly convex and the gradient is coordinate-wise $L_i$-smooth, i.e.
if we denote by $\nabla f(x)_i$ the $i$-th coordinate of the gradient of $f$, then
$$|\nabla f(x)_i - \nabla f(x + ce_i)_i| \leq L_i|c|.$$
Intuitively, this means that the $i$-th diagonal entry of the Hessian of $f$ is bounded by $L_i$ at each point.

Note also that the $L_i$-smoothness implies that we obtain a sufficient decrease when moving in the direction $i$, i.e.
$$f\left(x - \frac{1}{L_i}\nabla f(x)_i\right) \leq f(x) - \frac{1}{2L_i}\|\nabla f(x)_i\|^2.$$
This will be exactly what we need when choosing $g_n$ and $\delta_n$ in the scheme \eqref{discretization_smooth_stochastic}.

To obtain accelerated coordinate descent, we choose a coordinate $i$ at random, with the 
probability of choosing coordinate $i$ proportional to $\sqrt{L_i}$. We then set 
$$g_n = \frac{1}{s\sqrt{L_i}} \nabla f(x_n^\prime)_i.$$
and choose $s$ so that $\mathbb{E}_n(g_n) = \nabla f(x_n^\prime)$. This gives a step size of
$$s = \left(\displaystyle\sum_{i = 1}^n \sqrt{L_i}\right)^{-1}.$$

We now note that the required decrease in \eqref{smooth_decrease_condition} is
$$f(x_{n+1}) \leq f(x_n^\prime) - \frac{s^2}{2}\|g_n\|^2 = f(x_n^\prime) - \|sg_n\|^2 = f(x_n^\prime) - \frac{1}{2L_i}\|\nabla f(x)_i\|^2.$$

So we must simply choose $\delta_n$ so that
\begin{equation}
    f(x_n^\prime - \delta_n) \leq f(x_n^\prime) - \frac{1}{2L_i}\|\nabla f(x)_i\|^2.
\end{equation}
By the $L_i$-smoothness, the choice $\delta_n = \frac{1}{L_i}\nabla f(x)_i$ will work. This recovers a version of the original
accelerated coordinate descent in \cite{nesterov2012efficiency}. The relationship between the new method and Nesterov's original method is similar to the relationship in the deterministic case described in section \ref{original_smooth}.

Another possible choice is to choose $\delta_n = \frac{1}{L_j}\nabla f(x)_j$ where $j$ is the coordinate which maximizes
the decrease $\frac{1}{2L_j}\|\nabla f(x)_j\|^2$. This recovers the accelerated semi-greedy scheme presented in \cite{lu2018accelerating}.

Finally, we would like to conclude by explaining why accelerated coordinate descent is very efficient if $\nabla f(x)_i$ can be
calculated using only a small number (say $k \ll n$) of entries $x_{i_1},...,x_{i_k}$.

The issue is that each step of accelerated coordinate descent requires updating the full vectors $x$ and $v$. Namely in the
first and second steps in \eqref{discretization_smooth_stochastic} we update the whole vectors, while in the last two steps we only update the
selected coordinate $i$ ($g_n$ and $\delta_n$ are only non-zero in one coordinate).

The key observation is that we actually don't need to update all of the $x$ and $v$ coordinates in each step. To see why, imagine for a moment
that $g_n = \delta_n = 0$ in each step. Then the iteration becomes
\begin{equation}
    \begin{split}
        x_{n+1} &= x_n + sv_n \\
        v_{n+1} &= (1+s\sqrt{\alpha})^{-2}v_n, \\
    \end{split}
\end{equation}
which can be solved in closed form any number of iterations in the future. The key is to treat all of the coordinates in this manner
between their selection times. When coordinate $i$ is selected, this simple iteration must be modified to include
$\nabla f(x)_i$. However, until it is selected again, we can evaluate $x_i$ and $v_i$ at any iteration in closed form, as long as
we know how many iterations have passed and what the values were the last time $i$ was selected. This observation permits an efficient
implementation of \eqref{discretization_smooth_stochastic} as long as $\nabla f(x)_i$ doesn't depend on too many indices $i_1,...,i_k$.
This idea can be used to construct fast solvers for sparse, symmetric, diagonally dominant linear systems; compare with the work in
\cite{lee2013efficient}, for instance.

\section{Accelerated Methods in the Convex Case}\label{convex-section}
In the rest of the article, we were mainly interested in the case of strongly convex objectives $f$. In this section, we treat the convex case as well. So throughout this section, we assume that $f$ is convex and $L$-smooth, but not necessarily strongly convex. In this case, the differential equations modelling accelerated gradient descent were first determined and analyzed in \cite{su2014differential}. The equations introduced there are
\begin{equation}
 \label{variable_damped_hamiltonian_dynamics}
    \dot{x} = v,~\dot{v} = -\frac{r}{t}v - \nabla f(x),
\end{equation}
with $r \geq 3$. So for objectives which are not strongly convex, the damping rate isn't fixed, but should decay at a rate of $O(\frac{1}{n})$. With the help of the Lyapunov function
\begin{equation}\label{convex-lyapunov}
 L(t) = t^2(f(x(t)) - f(x^*)) + \frac{1}{2}\|(r-1)(x(t) - x^*)+tv(t)\|^2,
\end{equation}
it is shown in \cite{su2014differential} that the dynamics \eqref{variable_damped_hamiltonian_dynamics} with $v(0) = 0$ satisfies
\begin{equation}
 f(x(t)) - f(x^*) \leq \frac{(r-1)^2}{2t^2}\|x(0) - x^*\|^2.
\end{equation}
The discrete scheme analyzed in \cite{su2014differential} in this case is given by
\begin{equation}\label{eq-814}
 x_0 = y_0,~x_{n+1} = y_n - \frac{1}{L}\nabla f(y_n),~y_{n+1} = x_{n+1} + \frac{n}{n+r}(x_{n+1} - x_n).
\end{equation}
Inspired by this, in \cite{siegel2018accelerated}, the following somewhat more general scheme is analyzed:
\begin{equation}\label{general-convex-accelerated-scheme}
 x_0 = y_0,~x_{n+1} = y_n - \gamma_n\nabla f(y_n),~y_{n+1} = x_{n+1} + \alpha_n(x_{n+1} - x_n),
\end{equation}
where the following convergence theorem is proved.
\begin{theorem}[Theorem 3.3.2 in \cite{siegel2018accelerated}]\label{thesis-theorem}
 Let $q_n$ be a sequence of non-negative real number satisfying $q_0 = 0$ and 
 \begin{equation}\label{q-condition}
 (q_{n+1} + 1)^2 \leq (q_n + 2)^2 + 1.
 \end{equation}
 Then, if we set $\alpha_n = \frac{q_n}{2 + q_{n+1}}$ in \eqref{general-convex-accelerated-scheme} and choose $\gamma_n$ to be decreasing and satisfying the descent condition
 
\begin{equation}\label{descent-condition-convex}
 f(x_{n+1}) \leq f(y_n) - \frac{\gamma_n}{2}\|\nabla f(y_n)\|^2,
\end{equation}
 then we have
 \begin{equation}
  f(x_n) - f(x^*) \leq \frac{2}{\gamma_nq_n(q_n+2)}\|x_0 - x^*\|^2.
 \end{equation}

\end{theorem}
Note that in order to apply Theorem \ref{thesis-theorem} we do not even need to know the smoothness parameter $L$. We simply need to ensure that step $\gamma_n$ is chosen small enough to satisfy \eqref{descent-condition-convex} (and is decreasing). If $f$ is $L$-smooth, $\gamma_n = \frac{1}{L}$, and $q_n = \frac{2n}{r-1}$, we obtain the scheme \eqref{eq-814} from \cite{su2014differential}. We can obtain a slightly faster convergence rate by enforcing equality in \eqref{q-condition}.

Our purpose in this section is to extend this result to the non-smooth setting, by modifying the descent condition \eqref{descent-condition-convex}.

Similar to the approach in Section \ref{non-smooth-section}, the key to the analysis in the non-smooth case is to replace the descent condition \eqref{descent-condition-convex} by the condition
\begin{equation}\label{non-smooth-convex-descent-condition}
 \forall z,~f(x_{n+1}) - f(z) \leq \langle g_n, y_n - z\rangle - \frac{\gamma_n}{2}\|g_n\|^2,
\end{equation}
where $x_{n+1} = y_n - \gamma_ng_n$ for an appropriately chosen $g_n$ and $\gamma_n$. Note that if $f$ is $L$-smooth and we set $g_n = \nabla f(x_n)$ and $\gamma_n\leq \frac{1}{L}$, then this condition follows from \eqref{descent-condition-convex} since $f(y_n) - f(z) \leq \langle \nabla f(y_n), y_n - z\rangle$ by the convexity of $y$.

The most important instance where the condition \eqref{non-smooth-convex-descent-condition} can be guaranteed for a non-smooth function $f$ is when the objective $f$ is of the form
\begin{equation}
 f(x) = g(x) + h(x),
\end{equation}
where $g$ is $L$-smooth and convex, and $h$ is a potentially non-smooth function for which we can calculate the proximal step in equation \eqref{proximal_update}. In this case, updating $x_{n+1}$ via a forward-backward step
\begin{equation}
 x_{n+1} = \text{prox}_{\gamma_n,h}(y_n - \gamma_n\nabla g(y_n)),
\end{equation}
enables us to ensure that condition \eqref{non-smooth-convex-descent-condition} is satisfied for $\gamma_n \leq \frac{1}{L}$ (here we set $g_n = \frac{1}{\gamma_n}(y_n - x_{n+1})$ so that $x_{n+1} = y_n - \gamma_ng_n$). In particular, we have the following analog of Lemma \ref{forward-backward-lemma}.
\begin{lemma}\label{convex-forward-backward-lemma}
 Suppose that $f(x) = g(x) + h(x)$ with $g(x)$ an $L$-smooth convex function and $h(x)$ a convex function. Then, setting
 \begin{equation}\label{forward-backward-step}
 x_{n+1} = \text{prox}_{\gamma_n,h}(y_n - \gamma_n\nabla g(y_n)),
\end{equation}
or, written another way, $x_{n+1} = y_n - \gamma_ng_n$ with
\begin{equation}
 g_n = \frac{1}{\gamma_n}(y_n - \text{prox}_{\gamma_n,h}(y_n - \gamma_n\nabla g(y_n))),
\end{equation}
we guarantee that condition \eqref{non-smooth-convex-descent-condition} holds as long as $\gamma_n \leq \frac{1}{L}$.
\end{lemma}
\begin{proof}
 Note that from equation \eqref{forward-backward-step}, we see that
 \begin{equation}
  g_n = \frac{1}{\gamma_n}(y_n - x_{n+1}) = (\nabla g(y_n) + d_n),
 \end{equation}
 where $d_n\in \partial h(x_{n+1})$. Further, the convexity of $g$ and $h$ implies that for any $z$ we have
 \begin{equation}\label{eq-870}
  g(y_n) - g(z) \leq \langle \nabla g(y_n), y_n - z\rangle,
 \end{equation}
 and
 \begin{equation}\label{eq-874}
  h(x_{n+1}) - h(z) \leq \langle d_n, x_{n+1} - z\rangle.
 \end{equation}
 In addition, the $L$-smoothness of $g$ implies that
 \begin{equation}\label{eq-878}
  g(x_{n+1}) - g(y_n) \leq \langle \nabla g(y_n), x_{n+1} - y_n\rangle + \frac{L}{2}\|x_{n+1} - y_n\|^2.
 \end{equation}
 Adding together equations \eqref{eq-870}, \eqref{eq-874}, and \eqref{eq-878}, we get (using that $y_n - x_{n+1} = \gamma_ng_n$)
 \begin{equation}
 \begin{split}
  f(x_{n+1}) - f(y_n) &\leq \langle g_n, y_n - z\rangle - \gamma_n\langle d_n + \nabla g(y_n), g_n\rangle + \frac{L\gamma_n^2}{2}\|g_n\|^2 \\
  & = \langle g_n, y_n - z\rangle + \gamma_n\left(\frac{L\gamma_n}{2} - 1\right)\|g_n\|^2.
  \end{split}
 \end{equation}
 Finally, setting $\gamma_n \leq \frac{1}{L}$ guarantees that $\frac{L\gamma_n}{2} - 1 \leq -\frac{1}{2}$, which completes the proof.

\end{proof}

We proceed to analyze the following accelerated scheme:
\begin{equation}\label{modified-convex-accelerated-scheme}
 x_0 = y_0,~x_{n+1} = y_n - \gamma_ng_n,~y_{n+1} = x_{n+1} + \alpha_n(x_{n+1} - x_n),
\end{equation}
where $g_n$ and $\gamma_n$ are chosen so that the descent condition \eqref{non-smooth-convex-descent-condition} is satisfied. We have the following convergence result, analogous to Theorem \ref{thesis-theorem}.

\begin{theorem}\label{non-smooth-thesis-theorem}
 Let $q_n$ be a sequence of non-negative real number satisfying $q_0 = 0$ and 
 \begin{equation}\label{q-condition-2}
 (q_{n+1} + 1)^2 \leq (q_n + 2)^2 + 1.
 \end{equation}
 Then, if we set $\alpha_n = \frac{q_n}{2 + q_{n+1}}$ in \eqref{modified-convex-accelerated-scheme}, and choose $\gamma_n$ and $g_n$ so that $\gamma_n$ is  decreasing and condition \eqref{non-smooth-convex-descent-condition} is satisfied, we have
 \begin{equation}
  f(x_n) - f(x^*) \leq \frac{2}{\gamma_nq_n(q_n+2)}\|x_0 - x^*\|^2.
 \end{equation}

\end{theorem}
Note that Theorem \ref{non-smooth-thesis-theorem} holds as long as we can ensure the decrease condition \eqref{non-smooth-convex-descent-condition} and that $\gamma_n$ is decreasing. In particular, we do not necessarily need to know any smoothness parameters explicitly.
\begin{proof}
 We largely follow the argument in \cite{siegel2018accelerated} with minor modifications.
 Consider the Lyapunov function
\begin{equation}
    L_n = \gamma_nq_n(q_n + 2)(f(x_n) - f(x^*)) + \frac{1}{2}\|2(y_n - x^*) + q_n(y_n - x_n)\|^2.
\end{equation}
We will show that $L_n$ is decreasing, i.e. that $L_{n+1} \leq L_n$. 

This proves the theorem since $\gamma_nq_n(q_n + 2)(f(x_n) - f(x^*)) \leq L_n$
and $L_0 = 2\|y_0 - x^*\|^2 = 2\|x_0 - x^*\|^2$. 

We begin by breaking $L_n$ into two pieces, namely
\begin{equation}
    L^1_n = \gamma_nq_n(q_n + 2)(f(x_n) - f(x^*))
\end{equation}
and
\begin{equation}
    L^2_n = \frac{1}{2}\|2(y_n - x^*) + q_n(y_n - x_n)\|^2.
\end{equation}
Then we see that
\begin{equation}\label{eq-930}
\begin{split}
    L^1_{n+1} - L^1_n &= \gamma_nq_n(q_n + 2)(f(x_{n+1}) - f(x_n))\\ 
    & +(\gamma_{n+1}q_{n+1}(q_{n+1} + 2) - \gamma_nq_n(q_n + 2))(f(x_{n+1}) - f(x^*)).
\end{split}
\end{equation}
Since by assumption $\gamma_{n+1} \leq \gamma_n$ and $q_{n+1}(q_{n+1} + 2) = (q_{n+1} + 1)^2 - 1 \leq (q_n + 2)^2$ we see that
$\gamma_{n+1}q_{n+1}(q_{n+1} + 2) \leq \gamma_{n}(q_{n} + 2)^2$ and the bottom line in equation \eqref{eq-930} is bounded by
\begin{equation}
     (\gamma_{n}(q_{n} + 2)^2 - \gamma_nq_n(q_n + 2))(f(x_{n+1}) - f(x^*)) = 2\gamma_n(q_n + 2)(f(x_{n+1}) - f(x^*)).
\end{equation}
Thus we get the bound
\begin{equation}\label{eq-942}
    J^1_{n+1} - J^1_n \leq \gamma_n(q_n + 2)[2(f(x_{n+1}) - f(x^*)) + q_n(f(x_{n+1}) - f(x_n))]
\end{equation}
We now utilize the decrease condition \eqref{non-smooth-convex-descent-condition} with $z = x^*$ and with $z = x_n$ in equation \eqref{eq-942} to get
\begin{equation}\label{eq-949}
\begin{split}
    J^1_{n+1} - J^1_n &\leq \langle\gamma_n(q_n + 2)g_n,2(y_n - x^*) + q_n(y_n - x_n)\rangle\\ &-\frac{(\gamma_n(q_n + 2))^2}{2}\|g_n\|^2 
\end{split}
\end{equation}
Next, we consider $J^2_{n+1} - J^2_n$. Note that $J^2_n = (1/2)\|t_n\|^2$ with $$t_n = 2(y_n - x^*) + q_n(y_n - x_n).$$
Thus 
\begin{equation}\label{eq-956}
J^2_{n+1} - J^2_n = \langle t_{n+1} - t_n, t_n\rangle + \frac{1}{2}\|(t_{n+1} - t_n)\|^2.
\end{equation}
So we compute
\begin{equation}
    t_{n+1} - t_n = (2+q_n)(y_{n+1} - y_n)  - q_n(x_{n+1} - x_n) + (q_{n+1} - q_n)(y_{n+1} - x_{n+1}).
\end{equation}
Using the iteration \eqref{modified-convex-accelerated-scheme}, we see that $$y_{n+1} - x_{n+1} = \alpha_n(x_{n+1} - x_n)$$ and $$y_{n+1} - y_n = -\gamma_ng_n + \alpha_n(x_{n+1} - x_n).$$ 
This simplifies to
\begin{equation}
    t_{n+1} - t_n = -(q_n + 2)\gamma_ng_n + (2\alpha_n + q_{n+1}\alpha_n - q_n)(x_{n+1} - x_n) = -(q_n + 2)\gamma_ng_n,
\end{equation}
where the second equality is die to the choice $\alpha_n = \frac{q_n}{2+q_{n+1}}$. Using equation \eqref{eq-956}, we then get
\begin{equation}
 J^2_{n+1} - J^2_n = -\langle (q_n + 2)\gamma_ng_n, 2(y_n - x^*) + q_n(y_n - x_n)\rangle + \frac{(\gamma_n(q_n + 2))^2}{2}\|g_n\|^2.
\end{equation}
 Adding this to equation \eqref{eq-949}, we finally get
 \begin{equation}
  J_{n+1} - J_n = J^1_{n+1} - J^1_n + J^2_{n+1} - J^2_n \leq 0,
 \end{equation}
 as desired.

\end{proof}

Combining Theorem \ref{non-smooth-thesis-theorem} and Lemma \ref{convex-forward-backward-lemma}, we obtain the convergence rate of accelerated forward-backward gradient descent:
\begin{equation}\label{convex-accelerated-forward-backward-it}
 x_0 = y_0,~x_{n+1} = \text{prox}_{\gamma_n,h}(y_n - \gamma_n\nabla g (y_n)),~y_{n+1} = x_{n+1} + \alpha_n(x_{n+1} - x_n),
\end{equation}
in the convex case. 

In particular, if we set  $\gamma_n = \frac{1}{L}$ and $q_n = n$, so that $\alpha_n = \frac{n}{n+3}$ in \eqref{convex-accelerated-forward-backward-it}, then we have
\begin{equation}
 f(x_n) - f(x^*) \leq \frac{2L}{n(n+2)}\|x_0 - x^*\|^2,
\end{equation}
where $f(x) = g(x) + h(x)$ with $g$ convex and $L$-smooth. 

Note that we do not need to know the smoothness parameter $L$ to attain this convergence rate. In particular, by the remark after Theorem \ref{non-smooth-thesis-theorem}, it suffices to choose $\gamma_n$ adaptively to satisfy \eqref{non-smooth-convex-descent-condition} and such that $\gamma_n \leq \gamma_{n-1}$. This can typically be done using a simple line search.

Further, if we wish, a slightly better convergence rate can be obtained by enforcing equality in \eqref{q-condition-2}, which results in a slightly different choice of $\alpha_n$.

\section{Conclusion}
We found it remarkable that so many accelerated methods in the literature were discretizing the same underlying differential equations.
Furthermore, these differential equations have also been considered by the physics community in the context of, for example,
electronic structure calculations \cite{tassone1994acceleration}.

We hope that the ideas we have developed will help lead to the discovery of novel accelerated methods. In the future, we hope to
use our general framework to derive and numerically test specialized accelerated first-order algorithms. Of particular interest are accelerated
methods on manifolds. We believe the differential equation approach will prove important in understanding whether acceleration is
possible in the presence of curvature.

Finally, we believe that our approach simplifies and clarifies the connections between the vast number of accelerated optimization methods
in the literature and hope that it will help other researchers gain intuition about how they work and how to derive new ones.

\section{Acknowledgements}
We would like to thank Professors Russel Caflisch, Stanley Osher, and Jinchao Xu for their helpful suggestions and comments. 
This work was partially supported by AFOSR grant FA9550-15-1-0073.

\bibliographystyle{spmpsci_unsrt}
\bibliography{refs}

\end{document}